\author{Masaki Kameko}
\address{Department of Mathematical Sciences,
%College of Systems Engineering and Science, 
Shibaura Institute of Technology,
307 Minuma-ku Fukasaku, Saitama-City 337-8570, Japan}
\email{kameko@shibaura-it.ac.jp}
\thanks{This work was supported by JSPS KAKENHI Grant Number JP17K05263.}
\keywords{classifying space, torsion, cohomology, gauge group}
\subjclass[2010]{55R40}
\newtheorem{theorem}{Theorem}[section]
\newtheorem{proposition}[theorem]{Proposition}
\newtheorem{lemma}[theorem]{Lemma}
\newtheorem{corollary}[theorem]{Corollary}
\theoremstyle{definition}
\begin{document}
\title{Torsion in classifying spaces of gauge groups}
\maketitle

\begin{abstract}
We determine when the integral homology of the classifying space of a $PU(n)$-gauge group over the sphere $S^2$ has torsion.
\end{abstract}

%%%%%%%%%%%%%%%%%%%%%%%%%%%%%%%%%%%%%%%%%%%%

\section{Introduction} \label{section:1}

%:exceptional isomorphism, Tsukuda's theorem

For a topological space $X$, we say $X$ has torsion if its integral homology does. Let $G$ be a compact connected Lie group. The cohomology of the connected Lie group $G$, its loop space $\Omega G$, and its classifying space $BG$ has been studied by many mathematicians after the pioneering works of Hopf, Bott, and Borel. The loop space $\Omega G$ has no torsion. The classifying space $BG$ has torsion if and only if $G$ does. 

Let $P\to X$ be a principal $G$-bundle over a paracompact space $X$. Then, there is a classifying map $f\colon X\to BG$. The group of bundle automorphisms covering the identity on X is called the gauge group $\mathcal{G}(P)$. The classifying space $B\mathcal{G}(P)$ is homotopy equivalent to the path-component of the mapping space $\mathrm{Map}(X, BG)$ containing the classifying map $f$ as in \cite{atiyah-bott-1983}, \cite{gottlieb-1972}. If $X=S^1$, since $\pi_1(BG)=\{0\}$, the mapping space $\mathrm{Map}(S^1, BG)$ is path-connected and it has torsion if and only if $G$ does. If $X=S^2$, since $\pi_2(BG)$ might not be zero, the mapping space $\mathrm{Map}(S^2, BG)$ may not be path-connected. The path-component that contains the trivial map is homotopy equivalent to the classifying space of the gauge group of the trivial $G$-bundle over $S^2$, and it has torsion if and only if $G$ does. However, the situation is different for other path-components that are homotopy equivalent to classifying spaces of gauge groups of nontrivial $G$-bundles. 

Let $SO(n)$ be the special orthogonal groups. Classification of $SO(n)$-bundles over $S^2$ is determined by the Stiefel-Whitney class $w_2\in \mathbb{Z}/2=\{0,1\}=\pi_2(BSO(n))$. The path-component of the mapping space is homotopy equivalent to the classifying space of the gauge group of the nontrivial $SO(n)$-bundle over $S^2$. Tsukuda \cite{tsukuda-1997} showed that it has no torsion for $n=3$. Minowa \cite{minowa-2023} proved that it has no torsion for $n=3,4$ and torsion for $n\geq 5$.

The special orthogonal group $SO(3)$ could be regarded as the projective unitary group $PU(2)=U(2)/S^1$. In this paper, we generalize Tsukuda's result for projective unitary groups $PU(n)$, $n\geq 2$ and determine when the classifying space of a $PU(n)$-gauge group over the sphere $S^2$ has torsion.

%%%%%%%%%%%%%%%%%%%%%%%%%%%%%%%%%%%%%%%%%%%%

%: main theorem

Throughout the rest of this paper, let $n$ be an integer greater than or equal to $2$. The second homotopy group $\pi_2(BPU(n))$ is isomorphic to the cyclic group $\mathbb{Z}/n$.  We identify the cyclic group $\mathbb{Z}/n$ with its complete set of representatives $\{ 0,1, \dots, n-1\}$. Let $k$ be an element in \[ \pi_2(BPU(n))=\mathbb{Z}/n=\{ 0,1, \dots, n-1\}. \] Let us denote by $\mathrm{Map}_k(S^2, BPU(n))$ the path-component of the mapping space $\mathrm{Map}(S^2, BPU(n))$ containing maps in the homotopy class $k$. Let $p$ be a prime number. Unless explicitly stated, $H^{*}(X)$ is the mod $p$ cohomology of the topological space $X$. The following is the $p$-local form of our result.

\begin{theorem} \label{theorem:1.1}
The following holds for $\mathrm{Map}_k (S^2, BPU(n))$.
\begin{itemize}
\item[{\rm (1)}] If $n\not \equiv 0 \mod (p)$, it has no $p$-torsion.
\item[{\rm (2)}] If $n\equiv 0 \mod (p)$  and $k\not \equiv 0\mod (p)$, it has no $p$-torsion.
\item[{\rm (3)}] If $n\equiv 0 \mod (p)$  and $k\equiv 0\mod (p)$, it has $p$-torsion.
\end{itemize}
\end{theorem}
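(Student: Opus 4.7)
The plan is to exploit that $SU(n)\to PU(n)$ is the universal cover with deck group the center $\mathbb{Z}/n$, which yields a Puppe fibration of classifying spaces
\[ BSU(n)\to BPU(n)\xrightarrow{\chi} K(\mathbb{Z}/n,2). \]
Applying $\mathrm{Map}(S^2,-)$, using the splitting $\mathrm{Map}(S^2,K(\mathbb{Z}/n,2))\simeq K(\mathbb{Z}/n,2)\times \mathbb{Z}/n$ together with the fact that $\chi$ induces an isomorphism on $\pi_2$, and restricting to the $k$-th component produces the fibration
\[ \mathrm{Map}(S^2,BSU(n))\to \mathrm{Map}_k(S^2,BPU(n))\to K(\mathbb{Z}/n,2). \]
The fiber is simply connected (since $\pi_1(BSU(n))=\pi_3(BSU(n))=0$) and has torsion-free integral cohomology concentrated in even degrees; this follows from Bott's theorem on $\Omega SU(n)$, the evaluation fibration's section by constant maps, and the polynomial structure of $H^*(BSU(n);\mathbb{Z})$.

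For Case (1), the base $K(\mathbb{Z}/n,2)$ has trivial mod-$p$ cohomology (as $\mathbb{Z}/n$ has order coprime to $p$), so the projection is a mod-$p$ equivalence and $\mathrm{Map}_k(S^2,BPU(n))$ inherits $p$-torsion-freeness from the fiber. For Cases (2) and (3), with $p\mid n$, I analyze the mod-$p$ Serre spectral sequence, whose $E_2$-page is $H^*(K(\mathbb{Z}/n,2);\mathbb{F}_p)\otimes H^*(\mathrm{Map}(S^2,BSU(n));\mathbb{F}_p)$. The decisive computation is the transgression of the degree-$2$ class $\bar y_2\in H^2(\mathrm{Map}(S^2,BSU(n));\mathbb{F}_p)=\mathbb{F}_p$ coming from the James generator of $H^2(\Omega SU(n);\mathbb{F}_p)$, namely
\[ d_3(\bar y_2)=k\cdot\beta\iota_2 \quad\text{in}\quad H^3(K(\mathbb{Z}/n,2);\mathbb{F}_p)=\mathbb{F}_p\cdot\beta\iota_2, \]
where the coefficient $k$ records the extension class of $0\to\mathbb{Z}\to\pi_2(\mathrm{Map}_k(S^2,BPU(n)))\to\mathbb{Z}/n\to 0$ determined by the fibration.

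Granting this, Case (2) follows by Kudo's transgression theorem: since $k$ is a unit mod $p$, $d_3$ is an isomorphism, and transgressions of the Steenrod operations (and Bocksteins, for $p$ odd) applied to $\bar y_2$ successively kill the full set of $p$-torsion generators of $H^*(K(\mathbb{Z}/n,2);\mathbb{F}_p)$. The $E_\infty$-page then has the same Poincar\'e series as the rational cohomology of $\mathrm{Map}_k(S^2,BPU(n))$, which by rational homotopy theory equals $H^*(BSU(n);\mathbb{Q})\otimes H^*(SU(n);\mathbb{Q})$; this forces $p$-torsion-freeness of the integral cohomology. In Case (3), $d_3=0$, so $\beta\iota_2$ survives to $E_\infty$, and $\beta(\chi^*\iota_2)=\chi^*(\beta\iota_2)$ is a nonzero class in $H^3(\mathrm{Map}_k(S^2,BPU(n));\mathbb{F}_p)$, detecting genuine $p$-torsion in integral cohomology.

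The main obstacle is establishing the transgression formula $d_3(\bar y_2)=k\cdot\beta\iota_2$. My plan is to use the low-degree duality between the transgression $d_3$ in the cohomological Serre spectral sequence and the extension class of $\pi_2(\mathrm{Map}_k(S^2,BPU(n)))$ read off from the fibration's homotopy long exact sequence; then to compute this $\pi_2$-extension explicitly via the clutching description of a principal $PU(n)$-bundle of degree $k$ over $S^2$, reading off the factor of $k$ from the degree of the clutching function in $\pi_1(PU(n))=\mathbb{Z}/n$.
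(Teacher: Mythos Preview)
Your approach via the fibration $BSU(n)\to BPU(n)\to K(\mathbb{Z}/n,2)$ is genuinely different from the paper's.  The paper instead uses the fibration $BS^1\to BU(n)\to BPU(n)$, obtains a fibration
\[
F\simeq BS^1 \xrightarrow{\varphi} \mathrm{Map}_k(S^2,BU(n)) \longrightarrow \mathrm{Map}_k(S^2,BPU(n)),
\]
and shows that $p$-torsion is equivalent to the vanishing of $\varphi^*$ on $H^2$.  It then computes $\varphi^*$ on the two generators $\pi^*(c_1)$ and $\sigma(c_2)$ (the latter being Takeda's free double suspension of $c_2$), using Newton's identities and Steenrod operations to reduce everything to showing that a certain matrix in $SL_n(\mathbb{Z}/p)$ has $p$-power order.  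No Eilenberg--MacLane cohomology or Kudo-type transgression analysis is needed.

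Your outline, however, has a genuine gap.  The assertion
\[
H^3(K(\mathbb{Z}/n,2);\mathbb{F}_p)=\mathbb{F}_p\cdot\beta\iota_2
\]
is false whenever $p^2\mid n$: the mod-$p$ Bockstein of $\iota_2$ then vanishes (the class lifts to $\mathbb{Z}/p^2$ coefficients), yet $H^3(K(\mathbb{Z}/n,2);\mathbb{F}_p)\cong\mathbb{F}_p$ is still nonzero.  Consequently your formula $d_3(\bar y_2)=k\cdot\beta\iota_2$ gives $d_3=0$ for every $k$ once $p^2\mid n$, and your Case~(2) argument would wrongly predict $p$-torsion even when $k\not\equiv 0\pmod p$.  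The correct statement is that $d_3(\bar y_2)$ is a unit multiple of a generator of $H^3$ if and only if $p\nmid k$, but that generator is a higher Bockstein of $\iota_2$, not $\beta\iota_2$, and establishing this requires more than the $\pi_2$-extension heuristic you sketch.

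Even after repairing the transgression formula, your Case~(2) argument is incomplete.  Kudo's theorem lets you transport Steenrod operations across the transgression, but for $r\ge 2$ the algebra $H^*(K(\mathbb{Z}/p^r,2);\mathbb{F}_p)$ is \emph{not} generated over the Steenrod algebra by a single class; it needs both $\iota_2$ and the higher Bockstein $\beta_r\iota_2$.  So knowing $d_3(\bar y_2)$ and its images under $P^I$ does not automatically account for all odd-degree classes in the base, and the claimed Poincar\'e-series match with the rational cohomology is not established.  This is exactly the kind of bookkeeping the paper sidesteps by working with the $BU(n)$ fibration, where the fiber is $BS^1$ and the entire question collapses to a single degree-$2$ computation.
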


As an immediate consequence of Theorem~\ref{theorem:1.1}, we obtain the following global form of our result.

\begin{corollary}\label{corollary:1.2} 
The topological space $\mathrm{Map}_k(S^2, BPU(n))$ has no torsion if and only if $k$ is relatively prime to $n$.
\end{corollary}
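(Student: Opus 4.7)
Corollary~\ref{corollary:1.2} is a formal consequence of Theorem~\ref{theorem:1.1}, so the plan is just to lay out the bookkeeping. First I would recall the convention from Section~\ref{section:1}: a space has (integral) torsion iff $H_{*}(-;\mathbb{Z})$ contains a nonzero element of finite order, which---by decomposing any such element into its $p$-primary components---is equivalent to having $p$-torsion for at least one prime $p$. So $\mathrm{Map}_k(S^2, BPU(n))$ has no torsion if and only if it has no $p$-torsion for every prime $p$.

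Next I would combine the three clauses of Theorem~\ref{theorem:1.1}. They partition the possibilities for the pair $(n\bmod p,\, k\bmod p)$: clause~(1) covers $p\nmid n$, clause~(2) covers $p\mid n$ and $p\nmid k$, and clause~(3) covers $p\mid n$ and $p\mid k$; the first two yield no $p$-torsion, and only the last yields $p$-torsion. Consequently $\mathrm{Map}_k(S^2, BPU(n))$ has $p$-torsion if and only if $p\mid n$ and $p\mid k$, i.e., $p\mid\gcd(n,k)$.

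Combining these two observations, $\mathrm{Map}_k(S^2, BPU(n))$ has no torsion iff no prime $p$ divides $\gcd(n,k)$, iff $\gcd(n,k)=1$, iff $k$ is relatively prime to $n$. This is precisely the statement of the corollary.

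The argument contains no real obstacle, since it is purely formal; all of the content sits in Theorem~\ref{theorem:1.1}. As a sanity check I would flag the edge case $k=0$: since $n\geq 2$, $\gcd(n,0)=n>1$, and indeed any prime $p\mid n$ triggers clause~(3), so $\mathrm{Map}_0(S^2, BPU(n))$ always has torsion, consistent with the observation in Section~\ref{section:1} that the trivial-bundle component has torsion whenever $G=PU(n)$ does.
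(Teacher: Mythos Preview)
Your argument is correct and is exactly the intended route: the paper itself gives no separate proof of Corollary~\ref{corollary:1.2}, merely declaring it ``an immediate consequence of Theorem~\ref{theorem:1.1},'' and your bookkeeping (torsion $\Leftrightarrow$ $p$-torsion for some prime $p$, combined with the trichotomy of Theorem~\ref{theorem:1.1}) is precisely what that phrase unpacks to.
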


In particular, for $n\geq 2$, the topological space $\mathrm{Map}_1(S^2, BPU(n))$ has no torsion
even though the underlying Lie group $PU(n)$ has torsion.

%%%%%%%%%%%%%%%%%%%%%%%%%%%%%%%%%%%%%%%%%%%%

%: organization

This paper is organized as follows.
In Section~\ref{section:2}, we show the existence of $p$-torsion in $\mathrm{Map}_k(S^2, BPU(n))$ is equivalent to the triviality of certain induced homomorphism in the mod $p$ cohomology. Section~\ref{section:3} recalls the free double suspension in Takeda \cite{takeda-2021} and its elementary properties. Section~\ref{section:4} collects some elementary facts on the mod $p$ cohomology of $BU(n)$. In Section~\ref{section:5}, we prove Theorem~\ref{theorem:1.1} assuming Lemma~\ref{lemma:5.6} on an $n\times n$ matrix $B$.
In Section~\ref{section:6}, we prove Lemma~\ref{lemma:5.6}. 

%%%%%%%%%%%%%%%%%%%%%%%%%%%%%%%%%%%%%%%%%%%%

%: acknowledgements

The author would like to thank Yuki Minowa for his talk on \cite{minowa-2023} at the Homotopy Theory Symposium at the Osaka Metropolitan University on November 5, 2023. This work was inspired by his talk.

%%%%%%%%%%%%%%%%%%%%%%%%%%%%%%%%%%%%%%%%%%%%

\section{Torsion}\label{section:2}

%:From p-torsion to nonzero odd degree element

In this section, we show that the existence of $p$-torsion of a path-component is equivalent to the triviality of certain induced homomorphism.

Let us fix a fiber bundle $BU(n)\to BPU(n)$ induced by the obvious projection map $U(n)\to PU(n)$. We denote the inclusion map of its fiber by $\phi\colon BS^1 \to BU(n)$. It is a map induced by the obvious inclusion map $S^1 \to U(n)$ where $S^1$ consists of the scalar matrices in the unitary group $U(n)$. Consider the commutative diagram induced by the fiber bundle $BU(n)\to BPU(n)$.
\[
\begin{diagram}
\node{F_0} \arrow{e}\arrow{s} \node{\Omega^2_k BU(n)}\arrow{e,t} {\simeq} \arrow{s,r}{\iota_k}\node{\Omega^2_k BPU(n)} \arrow{s} \\
\node{F}\arrow{e,t}{\varphi}\arrow{s,l} {\simeq} \node{\mathrm{Map}_k(S^2, BU(n))} \arrow{e}\arrow{s,r}{\pi}  \node{\mathrm{Map}_k(S^2, BPU(n))} \arrow{s} \\
\node{BU(1)} \arrow{e,t}{\phi} \node{BU(n)} \arrow{e} \node{BPU(n)}
\end{diagram}
\]
Both vertical maps in the bottom-right square are evaluation maps at the base point of $S^2$, and
all maps in the bottom-right square are fibrations. Moreover, all horizontal and vertical sequences are fiber sequences. In particular, $\Omega_k^2 BU(n)$,  $\Omega_k^2 BPU(n)$ are fibers of evaluation maps.
Since 
\[
\Omega_k^2 BU(n)\to \Omega_k^2 BPU(n)
\]
is a homotopy equivalence, the fiber $F_0$ is contractible, and the map $F\to BS^1$ is also a homotopy equivalence.

%:proposition:2.1

The goal of this section is to prove the following proposition.

\begin{proposition}\label{proposition:2.1}
The following are equivalent.
\begin{itemize}
\item[{\rm (1)}] The topological space $\mathrm{Map}_k(S^2, BPU(n))$ has $p$-torsion.
\item[{\rm (2)}] The mod $p$ cohomology of $\mathrm{Map}_k(S^2, BPU(n))$ has a nonzero odd degree element.
\item[{\rm (3)}] The induced homomorphism $\varphi^{*}\colon H^2(\mathrm{Map}_k(S^2, BU(n)))\to 
H^2(F)$ is zero.
\end{itemize}
\end{proposition}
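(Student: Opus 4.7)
The plan is to run two spectral sequences: a Bockstein spectral sequence on $B := \mathrm{Map}_k(S^2, BPU(n))$ to handle (1)$\iff$(2), and the mod $p$ Serre spectral sequence of the fiber sequence $F \to E \to B$ (with $E := \mathrm{Map}_k(S^2, BU(n))$) to handle (2)$\iff$(3). Before starting I would record two structural preliminaries. Using the evaluation fibration $\Omega U(n) \to E \to BU(n)$, whose fiber and base both have torsion-free integral cohomology concentrated in even degrees, parity forces the Serre spectral sequence to collapse at $E_2$ with torsion-free $E_\infty$; hence $H^*(E;\mathbb{Z})$ is torsion free and concentrated in even degrees. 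The analogous fibration over $BPU(n)$, combined with the rational equivalence of $PU(n)$ with a product of odd spheres, shows that $H^*(B;\mathbb{Q})$ is also concentrated in even degrees.

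For (1)$\iff$(2), consider the Bockstein spectral sequence for $B$: its $E_1$-page is $H^*(B;\mathbb{F}_p)$, and it converges to $H^*(B;\mathbb{Z})/\mathrm{torsion} \otimes \mathbb{F}_p$, which lies in even degrees by the preliminary remark. If $H^*(B;\mathbb{F}_p)$ is concentrated in even degrees, every differential has odd-degree target and vanishes, so the spectral sequence collapses at $E_1$ and $H_*(B;\mathbb{Z})$ has no $p$-torsion. Conversely, if there is an odd-degree class in $H^*(B;\mathbb{F}_p)$, then since $E_\infty$ is in even degrees this class must participate in a nontrivial Bockstein differential, forcing $p$-torsion in $H_*(B;\mathbb{Z})$.

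For (2)$\iff$(3), run the mod $p$ Serre spectral sequence of $F \to E \to B$. Since $F \simeq BU(1)$, we have $H^*(F;\mathbb{F}_p) = \mathbb{F}_p[x]$ with $|x|=2$; since $B$ is simply connected, $E_2^{p,q} = H^p(B;\mathbb{F}_p) \otimes \mathbb{F}_p[x]$. The differential $d_r$ on the class $x$ lands in $E_r^{r,3-r}$, which vanishes for $r=2$ and for $r \geq 4$, so the only possibly nonzero differential on $x$ is $d_3(x) \in H^3(B;\mathbb{F}_p)$. Because $\varphi^*$ in degree 2 agrees with the edge homomorphism $H^2(E) \twoheadrightarrow E_\infty^{0,2} \hookrightarrow H^2(F)$, the map $\varphi^*$ is zero in degree 2 if and only if $x$ is not a permanent cycle, equivalently $d_3(x) \neq 0$, which immediately forces $H^3(B;\mathbb{F}_p) \neq 0$ and hence (2). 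Conversely, if $d_3(x) = 0$, each $x^k$ is a permanent cycle; since $d_r$ vanishes on base classes $E_r^{*,0}$ by degree for $r \geq 2$, the derivation property forces the spectral sequence to collapse. Then $H^*(E;\mathbb{F}_p) \cong H^*(B;\mathbb{F}_p) \otimes \mathbb{F}_p[x]$ additively, and the even-degree concentration of $H^*(E;\mathbb{F}_p)$ transfers to $H^*(B;\mathbb{F}_p)$, negating (2).

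The main obstacle is the preliminary parity statement for $H^*(E;\mathbb{Z})$ and $H^*(B;\mathbb{Q})$: without it, neither the Bockstein argument nor the collapse-by-contradiction step in the Serre argument can conclude that odd-degree mod $p$ classes must be present exactly when there is torsion or when $d_3(x) \neq 0$. Once the parity claims are in hand, both equivalences fall out formally from the two spectral sequences.
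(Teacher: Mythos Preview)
Your argument is correct and follows essentially the same route as the paper: the paper also uses the evaluation fibrations to establish the parity preliminaries (together with finite generation of $H_*(B;\mathbb{Z})$, which you should state explicitly so that the Bockstein spectral sequence converges as claimed), and then runs the Serre spectral sequence of $F \to E \to B$ exactly as you do for (2)$\Leftrightarrow$(3), including the identification $E_3^{3,0}=H^3(B)$ and the Leray--Hirsch collapse when $d_3(x)=0$. The only cosmetic difference is that the paper handles (1)$\Leftrightarrow$(2) via the universal coefficient theorem directly (its Lemma~2.2) rather than the Bockstein spectral sequence, but the underlying content is identical.
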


%:lemma:2.2 torsion = nonzero odd degree element

To establish the equivalence of (1) and (2) in Proposition~\ref{proposition:2.1}, we use the following lemma.

\begin{lemma}\label{lemma:2.2} Let $X$ be a topological space. Suppose that the integral homology groups $H_i(X;\mathbb{Z})$ are finitely generated abelian groups for all $i$, and the rational cohomology of $X$ has no nonzero odd degree element. Then, the mod $p$ cohomology $H^{*}(X;\mathbb{Z}/p)$ has a nonzero odd degree element if and only if $X$ has $p$-torsion. \end{lemma}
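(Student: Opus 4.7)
The plan is to apply the universal coefficient theorem (UCT) for cohomology, which converts $p$-torsion in the integral homology into odd-degree classes in the mod $p$ cohomology, and to use the rational vanishing hypothesis to eliminate the free-rank contributions in odd degree.

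First I would invoke the structure theorem for finitely generated abelian groups to write $H_i(X;\mathbb{Z})\cong \mathbb{Z}^{b_i}\oplus T_i$ with $T_i$ a finite torsion group. Applying UCT with $\mathbb{Q}$-coefficients and using the hypothesis $H^n(X;\mathbb{Q})=0$ for odd $n$ then gives $b_n=0$ for all odd $n$.

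Next I would apply UCT with $\mathbb{Z}/p$-coefficients,
\[
H^n(X;\mathbb{Z}/p)\cong \mathrm{Hom}(H_n(X;\mathbb{Z}),\mathbb{Z}/p)\oplus \mathrm{Ext}(H_{n-1}(X;\mathbb{Z}),\mathbb{Z}/p),
\]
together with the standard identifications $\mathrm{Hom}(T_i,\mathbb{Z}/p)\cong \mathrm{Ext}(T_i,\mathbb{Z}/p)\cong T_i\otimes\mathbb{Z}/p$. For $n$ odd the free-rank contribution $(\mathbb{Z}/p)^{b_n}$ vanishes, so
\[
H^n(X;\mathbb{Z}/p)\cong (T_n\otimes\mathbb{Z}/p)\oplus (T_{n-1}\otimes\mathbb{Z}/p).
\]

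The last step is pure bookkeeping: the right-hand side is nonzero for some odd $n$ if and only if some $T_m$ has $p$-torsion (the case $m$ odd contributes to $H^m$, the case $m$ even contributes to $H^{m+1}$), which is exactly the statement that $X$ has $p$-torsion. Since everything reduces to one application of UCT together with the structure theorem, there is no serious obstacle; the only point requiring attention is the degree shift by one between the Hom and Ext summands, which is precisely what allows even-degree integral torsion to produce odd-degree mod $p$ classes.
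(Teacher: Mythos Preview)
Your proof is correct and follows essentially the same route as the paper: both arguments use the universal coefficient theorem over $\mathbb{Q}$ to force the odd-degree integral homology to be finite (equivalently, $b_n=0$ for $n$ odd), and then the universal coefficient theorem over $\mathbb{Z}/p$ to read off the equivalence between odd-degree mod $p$ classes and $p$-torsion in $H_*(X;\mathbb{Z})$. Your explicit decomposition $H_i\cong\mathbb{Z}^{b_i}\oplus T_i$ packages the same computation slightly more compactly, but there is no substantive difference in strategy.
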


\begin{proof}
First, we prove that the assumptions of Lemma~\ref{lemma:2.2} imply that $H_{2j+1}(X;\mathbb{Z})$ is a finite abelian group for all $j$.
By the universal coefficient theorem, we have an isomorphism
\[
H^{2j+1}(X;\mathbb{Q})\simeq 
\mathrm{Ext}^1(H_{2j}(X;\mathbb{Z}), \mathbb{Q})
\oplus
\mathrm{Hom}(H_{2j+1}(X;\mathbb{Z}), \mathbb{Q}).
\]
By the assumption that the rational cohomology of $X$ has no nonzero odd degree element, we have
\[
 \mathrm{Hom}(H_{2j+1}(X;\mathbb{Z}), \mathbb{Q}) =\{0\}.
 \]
By the assumption that the integral homology groups $H_{i}(X;\mathbb{Z})$ are finitely generated, 
$H_{2j+1}(X;\mathbb{Z})$ is a finite abelian group.

Next, we show that if $X$ has $p$-torsion, then $H^{2j+1}(X;\mathbb{Z}/p)$ is nontrivial for some $j$.
By the universal coefficient theorem, we have an isomorphism
\[
H^{2j+1}(X;\mathbb{Z}/p) \simeq 
\mathrm{Ext}^{1}(H_{2j}(X;\mathbb{Z}), \mathbb{Z}/p) 
\oplus
\mathrm{Hom}(H_{2j+1}(X;\mathbb{Z}), \mathbb{Z}/p).
\]
If $X$ has $p$-torsion, $H_{2j+1}(X;\mathbb{Z})$ or $H_{2j}(X;\mathbb{Z})$ has $p$-torsion for some $j$.
Therefore, $H^{2j+1}(X;\mathbb{Z}/p)$ is nontrivial.
 
Finally, we show that if $H^{2j+1}(X;\mathbb{Z}/p)$ is nontrivial for some $j$, $X$ has $p$-torsion.
By the universal coefficient theorem, we have an isomorphism
\[
H^{2j+1}(X;\mathbb{Z}/p) \simeq 
\mathrm{Ext}^{1}(H_{2j}(X;\mathbb{Z}), \mathbb{Z}/p) 
\oplus
\mathrm{Hom}(H_{2j+1}(X;\mathbb{Z}), \mathbb{Z}/p).
\]
Suppose that 
\[
\mathrm{Hom}(H_{2j+1}(X;\mathbb{Z}), \mathbb{Z}/p)
\]
 is nontrivial. Then, since $H_{2j+1}(X;\mathbb{Z})$ is a finite abelian group, $H_{2j+1}(X;\mathbb{Z})$ has $p$-torsion. 
Suppose that  
\[
\mathrm{Ext}^{1}(H_{2j}(X;\mathbb{Z}), \mathbb{Z}/p)
\]
 is nontrivial. Then, since $H_{2j}(X;\mathbb{Z})$ is a finitely generated abelian group,  $H_{2j}(X;\mathbb{Z})$ has $p$-torsion.
Hence, in either case, $X$ has $p$-torsion.
\end{proof}

\begin{proof}[Proof of Proposition~\ref{proposition:2.1}, {\rm (1)} $\Leftrightarrow$ {\rm (2)}]
Let us consider the right vertical fiber sequence
\[
\Omega_k^2 BPU(n) \to \mathrm{Map}_k(S^2, BPU(n)) \to BPU(n)
\]
and Leray-Serre spectral sequences associated with this fiber sequence.
The $E_2$-page of the Leray-Serre spectral sequence for the integral homology
consists of finitely generated abelian groups, and so are the integral homology groups of $\mathrm{Map}_k(S^2, BPU(n))$.
The $E_2$-page of the Leray-Serre spectral sequence for the rational cohomology
has no nonzero odd degree element. So the rational cohomology of $\mathrm{Map}_k(S^2, BPU(n))$ also
has no nonzero odd degree element. Thus, by Lemma~\ref{lemma:2.2}, $\mathrm{Map}_k(S^2, BPU(n))$ has $p$-torsion if and only if its mod $p$ cohomology has a nonzero odd degree element.
\end{proof}

%%%%%%%%%%%%%%%%%%%%%%%%%%%%%%%%%%%%%%%%%%%%

%:proposition:2.3 mod p cohomology of Map(S^2, BU(n))

Let $c_i\in H^{2i}(BU(n))$ be the mod $p$ reduction of the $i^{\mathrm{th}}$ Chern class. The following proposition is what we need on the mod $p$ cohomology of $\mathrm{Map}_k(S^2, BU(n))$ in this section. Section~\ref{section:5} gives a more detailed description of the generator $x$ in terms of $c_2$ and the free double suspension we will define in Section~\ref{section:3}.

\begin{proposition}\label{proposition:2.3}
The following hold.
\begin{itemize}
\item[{\rm (1)}] 
$H^{*}(\mathrm{Map}_k(S^2, BU(n)))$ has no nonzero odd degree element.
\item[{\rm (2)}] As an abelian group, 
$
H^{2}(\mathrm{Map}_k(S^2, BU(n)))
$ is generated by two elements $\pi^*(c_1)$ and $x$ such that 
$\iota_k^*(x)\not=0$.
\end{itemize}
\end{proposition}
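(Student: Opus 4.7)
The plan is to apply the mod $p$ Serre spectral sequence to the middle vertical fibration
\[
\Omega^2_k BU(n)\xrightarrow{\iota_k}\mathrm{Map}_k(S^2,BU(n))\xrightarrow{\pi}BU(n)
\]
of the diagram, much as in the proof of (1)$\Leftrightarrow$(2) of Proposition~\ref{proposition:2.1}.

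First I would identify the mod $p$ cohomology of the fiber. Using $\Omega BU(n)\simeq U(n)$ and looping the determinant fibration $SU(n)\to U(n)\to S^1$, we obtain $\Omega^2 BU(n)\simeq \Omega U(n)\simeq \mathbb{Z}\times \Omega SU(n)$, and the $H$-space translation on $\Omega U(n)$ gives a homotopy equivalence $\Omega^2_k BU(n)\simeq \Omega SU(n)$ for every $k$. The mod $p$ cohomology $H^*(\Omega SU(n))$ is a polynomial algebra on generators in even degrees $2,4,\dots,2n-2$; in particular it is concentrated in even degrees and is one-dimensional in degree $2$.

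Given that both base and fiber have cohomology supported in even degrees, the $E_2$-page of the Serre spectral sequence lives in even total degrees, every $d_r$ (which shifts total degree by $1$) vanishes, the spectral sequence collapses, and the total space has no odd-degree cohomology. This proves (1).

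For (2), the only $E_\infty$-terms contributing in total degree $2$ are $E_\infty^{2,0}=H^2(BU(n))=\mathbb{Z}/p\cdot c_1$ and $E_\infty^{0,2}=H^2(\Omega^2_k BU(n))\simeq \mathbb{Z}/p$. Because mod $p$ cohomology is a $\mathbb{Z}/p$-vector space, the two-step filtration on $H^2(\mathrm{Map}_k(S^2,BU(n)))$ splits as an abelian group into $\mathbb{Z}/p\oplus \mathbb{Z}/p$. One generator is $\pi^*(c_1)$; the other may be taken to be any class $x\in H^2(\mathrm{Map}_k(S^2,BU(n)))$ whose image under the edge homomorphism $\iota_k^*$ generates $H^2(\Omega^2_k BU(n))$, so $\iota_k^*(x)\neq 0$ by construction. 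The one place I would want to be careful is the uniform identification $\Omega^2_k BU(n)\simeq \Omega SU(n)$ across all components, which the $H$-space translation handles; beyond that, everything reduces to routine bookkeeping with the collapsed spectral sequence.
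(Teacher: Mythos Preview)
Your proof is correct and follows essentially the same route as the paper: apply the Serre spectral sequence to the fibration $\Omega_k^2 BU(n)\to \mathrm{Map}_k(S^2,BU(n))\to BU(n)$, observe that both base and fiber have cohomology concentrated in even degrees so the spectral sequence collapses, and read off the two generators in total degree $2$ from $E_\infty^{2,0}$ and $E_\infty^{0,2}$. Your explicit identification $\Omega_k^2 BU(n)\simeq \Omega SU(n)$ via $H$-space translation is a useful detail that the paper leaves implicit, but it does not change the argument.
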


\begin{proof}
Consider the Leray-Serre spectral sequence associated with the middle vertical fiber sequence
\[
\Omega_k^2 BU(n) \to \mathrm{Map}_k(S^2, BU(n)) \to BU(n),
\]
converging to the mod $p$ cohomology of $\mathrm{Map}_k(S^2, BU(n))$. Then, the $E_2$-page has no nonzero odd degree element. Hence, the spectral sequence collapses at the $E_2$-page, and we obtain (1). Furthermore, we have 
\begin{align*}
E_\infty^{0,2}&=H^2(\Omega_k^2 BU(n))\simeq \mathbb{Z}/p,\\
E_\infty^{1,1}&=\{0\}, \\
E_\infty^{2,0}&=H^2(BU(n))=\mathbb{Z}/p\{ c_1\}.
\end{align*}
Hence, we have (2).
\end{proof}

\begin{proof}[Proof of Proposition~\ref{proposition:2.1},  {\rm (2)} $\Leftrightarrow$ {\rm (3)}]
We consider the Leray-Serre spectral sequence associated with the middle horizontal fiber sequence
\[
F \stackrel{\varphi}{\longrightarrow} \mathrm{Map}_k ( S^2, BU(n)) \longrightarrow \mathrm{Map}_k(S^2, BPU(n))
\]
converging to the mod $p$ cohomology of $ \mathrm{Map}_k ( S^2, BU(n))$.
The mod $p$ cohomology ring of $F\simeq BS^1$ is a polynomial ring generated by a single element $u$ of degree $2$. The $E_2$-page is given by
\[
E_2^{*,*}=H^{*}(\mathrm{Map}_{k}(S^2, BPU(n))) \otimes H^{*}(F).
\]

If the induced homomorphism
\[
\varphi^{*}\colon H^{2}(\mathrm{Map}_k(S^2, BU(n))) \to H^{2}(F)
\]
is nonzero, the induced homomorphism
\[
\varphi^{*}\colon H^{*}(\mathrm{Map}_k(S^2, BU(n))) \to H^{*}(F)
\]
is surjective. Then, by the Leray-Hirsh theorem, the induced homomorphism
\[
H^{*}(\mathrm{Map}_{k}(S^2, BPU(n))) \to H^{*}(\mathrm{Map}_k(S^2, BU(n))) 
\]
is injective and, by Proposition~\ref{proposition:2.3} (1),  the mod $p$ cohomology of $\mathrm{Map}_{k}(S^2, BPU(n))$ also has no nonzero odd degree element. 

If the induced homomorphism
\[
\varphi^{*}\colon H^{2}(\mathrm{Map}_k(S^2, BU(n))) \to H^{2}(F)
\]
is zero, $u$ does not survive to the $E_\infty$-page.
Hence, $d_2(u)\not=0$ or $d_3(u)\not=0$ must hold.
Relevant subgroups of $E_2$-page are as follows.
\begin{align*}
E_2^{0,2}&=\mathbb{Z}/p\{ u \}, \\
E_2^{1,1}&=\{0\}, \\
E_2^{2,1}&=\{0\},\\
E_2^{3,0}&=H^{3}(\mathrm{Map}_k(S^2, BPU(n))).
\end{align*}
Since $d_2(u)\in E_2^{2,1}=\{0\}$, we have $d_2(u)=0$. Therefore, $d_3(u)\not=0$.
Since $E_2^{1,1}=\{0\}$, the differential $d_2\colon E_2^{1,1}\to E_2^{3,0}$ is zero and we have $E_3^{3,0}=E_2^{3,0}$.
Since 
\[
d_3(u) \in E_3^{3,0}\simeq H^3(\mathrm{Map}_k(S^2, BPU(n)))
\]
is nonzero, the mod $p$ cohomology of $\mathrm{Map}_{k}(S^2, BPU(n))$ has the nonzero odd degree element $d_3(u)$.
\end{proof}

%%%%%%%%%%%%%%%%%%%%%%%%%%%%%%%%%%%%%%%%%%%%

\section{Free double suspension}\label{section:3}

To describe the generator $x$ of $H^2(\mathrm{Map}_k(S^2, BU(n)))$ in Proposition~\ref{proposition:2.3} in more detail, we use the free double suspension 
\[
 \sigma\colon H^{*}(\mathrm{Map}_k(S^2, BU(n))) \to H^{*-2}(\mathrm{Map}_k(S^2, BU(n)))) 
\]
defined by Takeda in \cite{takeda-2021}. One may define the free double suspension over any coefficient groups. We focus on the mod $p$ cohomology. Our definition of $\sigma$ differs slightly from Takeda's $\hat{\sigma}_f^2$ in \cite{takeda-2021} but is the same homomorphism. 

In this section, let $X$ be a simply connected topological space. We denote by $*$ the base points of both $S^2$ and $X$. Let  $k$ be a homotopy class in $\pi_2(X)$ and $0$ is the homotopy class in $\pi_2(X)$ containing the trivial map.
Let 
\[
\mathrm{pr}_2\colon S^2 \times \mathrm{Map}_k(S^2, X)\to \mathrm{Map}_k(S^2, X)
\]
be the obvious projection map.
We use the evaluation maps
\[
\mathrm{ev} \colon S^2 \times \mathrm{Map}_k (S^2, X) \to X, \quad \mathrm{ev}(s, g)=g(s),
\]
and its restriction to $\mathrm{Map}_k(S^2, X)=\{*\} \times \mathrm{Map}_k(S^2, X)$,
\[
\pi \colon \mathrm{Map}_k(S^2, X)\to X, \quad \pi(g)=g(*), 
\]
to define a homomorphism
\[
\sigma\colon H^{*}(X)\to H^{*-2}(\mathrm{Map}_k (S^2, X)).
\]
Let us fix a generator of $H^2(S^2)\cong \mathbb{Z}/p$ and we denote it by $u_2$. We define $\sigma$ by
\[
\mathrm{ev}^*(x)-(\pi \circ \mathrm{pr}_2)^* (x)=u_2 \otimes \sigma(x).
\]
Let $\Omega_k^2 X=\pi^{-1}(*)$ and denote the inclusion map by $\iota_k\colon \Omega_k^2 X\to \mathrm{Map}_k(S^2, X)$.
We define \[
\tilde{\sigma}_k\colon H^{*}(X)\to H^{*-2}(\Omega_k^2 X)
\]
by $\iota_k^* \circ \sigma$. Proposition~\ref{proposition:3.1} (1) below is nothing but a particular form of Proposition 2.1 in \cite{takeda-2021}.

%:proposition:3.1

\begin{proposition}\label{proposition:3.1}
The homomorphism $\sigma$ satisfies the following.
\begin{itemize}
\item[{\rm (1)}] $ \sigma(x \cdot y) =\sigma(x) \cdot \pi^{*}(y)+\pi^{*}(x) \cdot \sigma(y)$,
\item[{\rm (2)}] for a cohomology operation $\mathcal{O}$ of positive degree, $\sigma(\mathcal{O} x)=\mathcal{O}\sigma(x)$.
\end{itemize}
\end{proposition}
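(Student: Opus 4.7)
My plan is to exploit the Künneth decomposition of $H^{*}(S^2 \times \mathrm{Map}_k(S^2, X))$. Since $H^{*}(S^2;\mathbb{Z}/p)$ is a free $\mathbb{Z}/p$-module of rank $2$ with basis $\{1,u_2\}$, Künneth gives a natural isomorphism
\[
H^{*}(S^2 \times \mathrm{Map}_k(S^2, X)) \cong H^{*}(S^2) \otimes H^{*}(\mathrm{Map}_k(S^2, X))
\]
of graded rings, under which every class has a unique decomposition $1\otimes a + u_2\otimes b$. Because $\pi\circ\mathrm{pr}_2$ factors through $\mathrm{Map}_k(S^2,X)$, we have $(\pi\circ\mathrm{pr}_2)^{*}(x) = 1\otimes\pi^{*}(x)$, so the defining equation reads
\[
\mathrm{ev}^{*}(x) = 1\otimes\pi^{*}(x) + u_2\otimes\sigma(x).
\]

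For (1), I would apply the ring homomorphism $\mathrm{ev}^{*}$ to $x\cdot y$ and multiply out the two decompositions. The coefficient of $1$ reproduces $\pi^{*}(xy)$; the coefficient of $u_2$ is $\sigma(x)\pi^{*}(y)+\pi^{*}(x)\sigma(y)$ (no signs intervene because $|u_2|$ is even); and the would-be $u_2^2$-term dies since $u_2^2=0$ in $H^{*}(S^2)$. Matching Künneth components against $\mathrm{ev}^{*}(xy) = 1\otimes\pi^{*}(xy) + u_2\otimes\sigma(xy)$ yields the Leibniz rule. For (2), I would combine naturality $\mathrm{ev}^{*}(\mathcal{O}x) = \mathcal{O}(\mathrm{ev}^{*}(x))$ with the Cartan formula. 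The summand $1\otimes\pi^{*}(x)$ contributes $1\otimes\pi^{*}(\mathcal{O}x)$; on $u_2\otimes\sigma(x)$ the Cartan formula produces terms $\mathcal{O}'(u_2)\otimes\mathcal{O}''(\sigma(x))$, and because $|\mathcal{O}|>0$ while $H^{i}(S^2;\mathbb{Z}/p)=0$ for $i>2$, every such term with $|\mathcal{O}'|>0$ vanishes, leaving only $u_2\otimes\mathcal{O}(\sigma(x))$. Matching against $\mathrm{ev}^{*}(\mathcal{O}x) = 1\otimes\pi^{*}(\mathcal{O}x)+u_2\otimes\sigma(\mathcal{O}x)$ gives $\sigma(\mathcal{O}x) = \mathcal{O}\sigma(x)$.

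The only delicate point is the Cartan formula invoked in (2); this is a standard property of the Steenrod squares, Steenrod powers, Bockstein, and their composites, which are what is meant here by positive-degree cohomology operations. Beyond that, the proof is pure bookkeeping in the Künneth decomposition, driven by the two facts $u_2^2=0$ and $H^{>2}(S^2;\mathbb{Z}/p)=0$, which are what make both identities as clean as they are.
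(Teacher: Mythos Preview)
Your proof is correct and follows essentially the same approach as the paper's. For (1) both you and the paper multiply out $\mathrm{ev}^{*}(x)\cdot\mathrm{ev}^{*}(y)$ in the K\"unneth decomposition and use $u_2^2=0$; for (2) the paper compresses your Cartan-formula argument into the single remark ``since $\mathcal{O}u_2=0$'', which is exactly your observation that all terms with $|\mathcal{O}'|>0$ vanish because $H^{>2}(S^2)=0$.
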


\begin{proof}
(1) Since 
\begin{align*}\mathrm{ev}^{*}(x) \cdot \mathrm{ev}^{*}(y)&=(u_2 \otimes \sigma(x) +1\otimes \pi^{*}(x))\cdot (u_2 \otimes \sigma(y) +1\otimes \pi^{*}(y))
\\
&= u_2 \otimes \sigma(x) \cdot 1\otimes \pi^{*}(y) + 1\otimes \pi^{*}(x) \cdot u_2 \otimes \sigma(y)+1\otimes \pi^{*}(x)\cdot 1\otimes \pi^{*}(y)
\\
&=u_2 \otimes (\sigma(x)\cdot \pi^{*}(y)+\pi^{*}(x) \cdot \sigma(y))+1 \otimes (\pi^{*}(x)\cdot \pi^{*}(y)),
\end{align*} 
Hence, we have
\begin{align*}
\mathrm{ev}^{*}(x\cdot y)-(\pi\circ \mathrm{pr}_2)^*(x\cdot y)=u_2 \otimes (\sigma(x)\cdot \pi^{*}(y)+\pi^{*}(x) \cdot \sigma(y)).
\end{align*}
 (2) is also clear from the naturality of cohomology operation.
 \begin{align*}
 \mathcal{O} (\mathrm{ev}^{*}(x)-(\pi\circ \mathrm{pr}_2)^{*}(x))&=\mathrm{ev}^{*}(\mathcal{O}x)-(\pi\circ \mathrm{pr}_2)^{*}(\mathcal{O}x)
 \\
 &=u_2 \otimes \sigma(\mathcal{O}x),\\
 \mathcal{O}(u_2\otimes \sigma(x))&=u_2 \otimes \mathcal{O}\sigma(x),
 \end{align*}
 since $\mathcal{O} u_2=0$.
 Hence, we have 
 \[
 \sigma(\mathcal{O}x)=\mathcal{O}\sigma(x). \qedhere
 \]
\end{proof}

Next, we describe the relation between $\Omega_k^2 X$ and $\Omega_0^2 X$.
Let $X_1\vee X_2$ be the subspace of $X_1 \times X_2$ defined by
\[
X_1\vee X_2:=\{ (x_1,x_2)\in X_1 \times X_2 \;|\; x_1=* \mbox{ or } x_2=* \}
.
\]
Let $\nu\colon S^2 \to S^2 \vee S^2$ be the pinch map collapsing the sphere's equator.
We use it to define the addition on $\pi_2(X)$. 
Let $f\colon S^2\to X$ be a map representing $k\in \pi_2(X)$ and $\mathrm{c}_f\colon \Omega_0^2 X\to \{ f\}$ the obvious constant map.
Using $f$, we define 
\[
\mu_f\colon \Omega_0^2 X\to \Omega_k^2 X
\]
by
\begin{align*}
\mu_f(g)(s)&=f(s_1) \quad \mbox{if $\nu(s)=(s_1,*)$,}\\
\mu_f(g)(s)&=g(s_2) \quad \mbox{if $\nu(s)=(*, s_2)$.}
\end{align*}
The following lemma is a weak form of  Lemma 2.2 in \cite{takeda-2021}.
We use it to prove Proposition~\ref{proposition:5.1}.

%:lemma:3.2

\begin{lemma}\label{lemma:3.2}
Let $x$ be an element in $H^{i}(X)$.
If $i\not=2$, then we have
\[
\mu_f^{*} \circ \tilde{\sigma}_k(x)=\tilde{\sigma}_0(x).
\]
\end{lemma}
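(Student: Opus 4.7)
The plan is to compute $\mu_f^* \circ \tilde\sigma_k(x)$ by pulling back the defining relation for $\sigma$ along the map
\[
\mathrm{id}_{S^2} \times (\iota_k \circ \mu_f) \colon S^2 \times \Omega_0^2 X \to S^2 \times \mathrm{Map}_k(S^2,X),
\]
and then to evaluate the resulting pullback using the factorization of $\mu_f$ through the pinch map $\nu$.

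Define $\tilde\mu_f \colon S^2 \times \Omega_0^2 X \to X$ by $\tilde\mu_f(s,g) = \mu_f(g)(s)$, so that $\tilde\mu_f = \mathrm{ev} \circ (\mathrm{id}_{S^2} \times (\iota_k \circ \mu_f))$. Because $\pi \circ \iota_k \circ \mu_f$ is the constant map to the basepoint, pulling back $\mathrm{ev}^*(x) - (\pi \circ \mathrm{pr}_2)^*(x) = u_2 \otimes \sigma(x)$ along this composite gives, for $x$ of positive degree,
\[
u_2 \otimes \mu_f^* \tilde\sigma_k(x) = \tilde\mu_f^*(x).
\]
Applying the same reasoning to $\iota_0 \colon \Omega_0^2 X \to \mathrm{Map}_0(S^2,X)$ yields $u_2 \otimes \tilde\sigma_0(x) = (\mathrm{ev} \circ (\mathrm{id}_{S^2} \times \iota_0))^*(x)$. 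So it suffices to show $\tilde\mu_f^*(x) = u_2 \otimes \tilde\sigma_0(x)$ in $H^i(S^2 \times \Omega_0^2 X)$ whenever $i \neq 2$.

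The key step is to factor $\tilde\mu_f = \phi \circ (\nu \times \mathrm{id}_{\Omega_0^2 X})$, where $\phi \colon (S^2 \vee S^2) \times \Omega_0^2 X \to X$ is determined by $f \circ \mathrm{pr}_1$ on the first wedge copy of $S^2 \times \Omega_0^2 X$ and by $\mathrm{ev} \circ (\mathrm{id}_{S^2} \times \iota_0)$ on the second copy (these agree on $\{*\} \times \Omega_0^2 X$ since $f$ and elements of $\Omega_0^2 X$ are basepoint preserving). The K\"unneth theorem over $\mathbb{Z}/p$ lets one write
\[
\phi^*(x) = 1 \otimes \alpha + u_2^{(1)} \otimes \beta_1 + u_2^{(2)} \otimes \beta_2,
\]
with $\alpha \in H^i(\Omega_0^2 X)$ and $\beta_j \in H^{i-2}(\Omega_0^2 X)$. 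For $i > 2$, the restriction of $\phi^*(x)$ to the first copy of $S^2 \times \Omega_0^2 X$ equals $(f \circ \mathrm{pr}_1)^*(x) = 0$ since $H^i(S^2) = 0$, forcing $\alpha = 0$ and $\beta_1 = 0$; its restriction to the second copy is $u_2 \otimes \tilde\sigma_0(x)$, forcing $\beta_2 = \tilde\sigma_0(x)$. Because the composites $p_j \circ \nu \colon S^2 \to S^2$ are of degree one, $\nu^*(u_2^{(1)}) = \nu^*(u_2^{(2)}) = u_2$, and we conclude that $\tilde\mu_f^*(x) = u_2 \otimes \tilde\sigma_0(x)$. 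The cases $i = 0$ and $i = 1$ are trivial, since both sides of the claimed identity then lie in groups that vanish for degree reasons.

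There is no deep obstacle; the argument amounts to bookkeeping via a K\"unneth decomposition and two restriction maps. The restriction $i \neq 2$ is essential: in degree two, $(f \circ \mathrm{pr}_1)^*(x) = \langle x, k \rangle\, u_2 \otimes 1$ need not vanish, contributing an extra $\langle x, k \rangle \in H^0(\Omega_0^2 X)$ term to $\mu_f^* \tilde\sigma_k(x) - \tilde\sigma_0(x)$ and thereby breaking the identity in that one degree.
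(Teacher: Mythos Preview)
Your proof is correct and follows essentially the same strategy as the paper's: factor the evaluation $\tilde\mu_f$ through the pinch map $\nu$, observe that on the first wedge summand the map is $f \circ \mathrm{pr}_1$, and use that $\tilde H^i(S^2) = 0$ for $i \neq 2$ to kill the $f$-contribution. The paper packages this into a single commutative diagram passing through $S^2 \times \{f\} \vee S^2 \times \Omega_0^2 X$ and the isomorphism $H^i(S^2 \times \{f\} \vee S^2 \times \Omega_0^2 X) \cong H^i(S^2 \times \Omega_0^2 X)$ for $i \neq 2$; you carry out the same computation by an explicit K\"unneth decomposition of $H^*((S^2 \vee S^2) \times \Omega_0^2 X)$ and two restriction maps, which makes the vanishing of $\alpha$ and $\beta_1$ completely transparent.
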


\begin{proof}
We have the following commutative diagram by the definition of $\mu_f$.
\[
\begin{diagram}
\node{S^2\times \Omega_0^2X} \arrow{e,t}{ \nu\times 1} \arrow{s,l}{1\times \mu_f} \node{S^2 \times \Omega_0^2X \vee S^2 \times \Omega_0^2X} \arrow[2]{e,t}{1 \times \mathrm{c}_f \vee 1\times 1} \node[2]{S^2 \times\{ f\} \vee S^2 \times \Omega_0^2 X} \arrow{s,r}{\mathrm{ev} \vee \mathrm{ev}}
\\
\node{S^2 \times\Omega^2_k X} \arrow[3]{e,t}{\mathrm{ev}} \node[3]{X,}
\end{diagram}
\]
where we choose $f$ as the base point of both $ \{ f\}$ and $\Omega_k^2 X$, 
and the constant map $S^2\to \{*\}$ as the base point of $\Omega_0^2X$.
Since the reduced mod $p$ cohomology $\widetilde{H}^{i}(S^2\times \{f\})\simeq \widetilde{H}^{i}(S^2)$ is trivial for $i\not=2$, we  have
isomorphisms 
\[
H^{i}(S^2 \times \{f\} \vee S^2 \times \Omega_0^2 X) \to H^i(S^2 \times \Omega_0^2 X)
\]
and desired identity
\[
\tilde{\sigma}_0(x)=\mu_f^{*} \circ \tilde{\sigma}_k(x).
\]
 for $x\in H^i(X)$, $i\not=2$.
\end{proof}

%%%%%%%%%%%%%%%%%%%%%%%%%%%%%%%%%%%%%%%%%%%%

\section{Cohomology of $BU(n)$} \label{section:4}

In this section, we collect some elementary properties of the mod $p$ cohomology ring of $BU(n)$ and the induced homomorphism
\[
\phi^{*}\colon H^{*}(BU(n))\to H^{*}(BS^1).
\]

%: symmetric polynomials

Let us fix a generator $u$ of $H^2(BU(1))=H^2(BS^1)\simeq \mathbb{Z}/p$. 
Let 
\[
\iota\colon BU(1)^n \to BU(n)
\] be the map induced by the inclusion map of the maximal torus $U(1)^n$ consisting of diagonal matrices.
Let \[
B\mathrm{pr}_i\colon BU(1)^n \to BU(1) =BS^1
\]
be the map induced by the projection of $U(1)^n$ to its $i^{\mathrm{th}}$ factor $U(1)$, defined by $(x_1, \dots, x_n)\mapsto x_i$.
We denote $B\mathrm{pr}_i^{*}(u)\in H^2(BU(1)^n)$ by  $t_i$. The mod $p$ cohomology of $BU(1)^n$ is a polynomial ring generated by
$t_1, \dots, t_n$ and the induced homomorphism
\[
\iota^*\colon H^{*}(BU(n))\to H^{*}(BU(1)^n)=\mathbb{Z}/p[t_1, \dots, t_n]
\]
is injective, and its image is the set of symmetric polynomials in $t_1, \dots, t_n$.
In particular, $c_i$ is defined as the element such that
 $\iota^{*}(c_i)$ is the $i^{\mathrm{th}}$ elementary symmetric polynomial in $t_1, \dots, t_n$.
Let us define $s_i$ by
\[
\iota^{*}(s_i)=\sum_{j=1}^n t_j^i.
\]
The map $\phi\colon BS^1 \to BU(n)$ factors through
\[
BS^1\stackrel{\delta}{\longrightarrow} BU(1)^n \stackrel{\iota}{\longrightarrow} BU(n),
\]
where $\delta$ is the map induced by the diagonal map $x\mapsto (x,\dots, x)$.
Since $\delta^{*}(t_i)=u$ for $i=1, \dots, n$, we have 
\[
\phi^{*}(s_i)=n u^i
\]
  and \[ \phi^{*}(c_i)=\binom{n}{i} u^i.
  \]

%:lemma:4.1

We use the following Lemma~\ref{lemma:4.1} to prove Proposition~\ref{proposition:5.5}. 
The corresponding identity in symmetric polynomials is known as Newton's identity.

\begin{lemma} \label{lemma:4.1} In the mod $p$ cohomology of $BU(n)$, 
for $i\geq 0$, we have relations
\[
s_{n+i+1}+\sum_{j=1}^{n} (-1)^j c_j s_{n+i-j+1}=0.
\]
\end{lemma}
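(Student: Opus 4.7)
The plan is to reduce the identity to one in the polynomial ring $\mathbb{Z}/p[t_1,\dots,t_n]$ via the injective restriction $\iota^{*}$, and there verify the classical Newton identity in the form that arises from the characteristic polynomial of the $t_j$.

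First I would note that, by the injectivity of $\iota^{*}\colon H^{*}(BU(n))\to \mathbb{Z}/p[t_1,\dots,t_n]$ recalled above, it suffices to establish the identity
\[
\iota^{*}(s_{n+i+1})+\sum_{j=1}^{n}(-1)^j \iota^{*}(c_j)\iota^{*}(s_{n+i-j+1})=0
\]
after restriction, i.e.\ the corresponding identity among elementary and power-sum symmetric polynomials in $t_1,\dots,t_n$.

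Next I would use the splitting principle at the level of symmetric polynomials: from
\[
\prod_{j=1}^{n}(x-t_j)=x^n-\iota^{*}(c_1)x^{n-1}+\iota^{*}(c_2)x^{n-2}-\cdots+(-1)^n \iota^{*}(c_n),
\]
each $t_j$ is a root of the polynomial on the right-hand side. Substituting $x=t_j$, multiplying by $t_j^{i+1}$ (with $i\geq 0$ so that the exponent is positive), and summing over $j=1,\dots,n$, yields
\[
\sum_{j=1}^{n} t_j^{n+i+1}+\sum_{\ell=1}^{n}(-1)^{\ell}\iota^{*}(c_\ell)\sum_{j=1}^{n} t_j^{n+i-\ell+1}=0,
\]
which is precisely $\iota^{*}(s_{n+i+1})+\sum_{\ell=1}^{n}(-1)^\ell \iota^{*}(c_\ell)\iota^{*}(s_{n+i-\ell+1})=0$.

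There is no real obstacle here: the only point to be careful about is that the argument only gives Newton's identity in the ``high'' range $m=n+i+1>n$ (where the indices $n+i-\ell+1$ are all nonnegative and $s_{\cdot}$ is already defined); the classical Newton identities in the range $m\leq n$ are not needed and are not claimed. Reducing back through the injective map $\iota^{*}$ then yields the stated relation in $H^{*}(BU(n))$.
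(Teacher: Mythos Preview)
Your proof is correct. Both you and the paper reduce to the polynomial ring $\mathbb{Z}/p[t_1,\dots,t_n]$ via the injective map $\iota^{*}$ and then verify the resulting symmetric-polynomial identity, but the verifications differ. The paper introduces auxiliary symmetric polynomials $h_{\ell,n+i+1-\ell}$ (sums of monomials obtained from $t_1^{\ell}t_2\cdots t_{n+i+2-\ell}$ by permutation), computes each product $\iota^{*}(c_j)\cdot\iota^{*}(s_{n+i+1-j})$ in terms of two consecutive $h$'s, and observes that the alternating sum telescopes to zero. Your argument instead uses the characteristic-polynomial trick: each $t_j$ satisfies $\sum_{\ell=0}^{n}(-1)^{\ell}\iota^{*}(c_\ell)t_j^{n-\ell}=0$, and multiplying by $t_j^{i+1}$ and summing over $j$ yields the identity directly. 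Your route is shorter and more conceptual, and it makes transparent why the hypothesis $i\geq 0$ is exactly what is needed (so that the smallest exponent $n+i+1-n=i+1$ remains positive); the paper's telescoping argument is more hands-on but avoids appealing to any factorisation identity. Either way, the injectivity of $\iota^{*}$ finishes the proof.
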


\begin{proof}
Let us define symmetric polynomials $h_{i+2,n-1}, \dots, h_{n+i,1}$. 
For $\ell=i+2,\dots, n+i$, let $h_{\ell, n+i+1-\ell}$ be the sum of monomials in the polynomial ring $\mathbb{Z}/p[t_1, \dots, t_n]$ obtained from $t_1^{\ell} t_2 \cdots t_{n+i+2-\ell}$ by permuting $1, \dots, n+j+2-\ell$ in $1, \dots, n$. Then, we have
\begin{align*}
\iota^{*}(c_1)\cdot \iota^{*}(s_{n+i})&=\iota^{*}(s_{n+i+1})+h_{n+i,1},\\
\iota^{*}(c_j)\cdot \iota^{*}(s_{n+i+1-j})&=h_{n+i+2-j, j-1} +h_{n+i+1-j,j}, 
\intertext{for $2\leq j \leq  n-1$ and}
\iota^{*}(c_n)\cdot \iota^{*}(s_{i+1})&=h_{i+2, n-1}.
\end{align*}
Therefore, we have 
\begin{align*}
& \iota^{*}(s_{n+i+1}+\sum_{j=1}^{n} (-1)^j c_i s_{n+i+1-j})
\\
=& \iota^{*}(s_{n+i+1})-(\iota^{*}(s_{n+i+1})+h_{n+i,1})+\sum_{j=2}^{n-1} (-1)^j \left(h_{n+i+2-j,j-1}+h_{n+i+1-j, j}\right)+ (-1)^{n} h_{i+2, n-1}
\\
=&0.
\end{align*}
Since $\iota^*$ is injective, it completes the proof.
\end{proof}

%: Steenrod operations

If $p$ is an odd prime, let 
\[
\wp^i\colon H^j(X)\to H^{j+2i(p-1)}(X)
\]
 be the $i^{\mathrm{th}}$ Steenrod reduced power.
If $p=2$, let $\wp^1=\mathrm{Sq}^2$ and $\wp^{2^{\ell-1}}= \mathrm{Sq}^{2^\ell}$ for $\ell\geq 2$, where 
\[
\mathrm{Sq}^i\colon H^{j}(X)\to H^{j+i}(X)
\] is the $i^{\mathrm{th}}$ Steenrod square. 
We define cohomology operations $\mathcal{Q}_\ell$ inductively by 
$\mathcal{Q}_1=\wp^1$, 
\[
\mathcal{Q}_\ell=\wp^{p^{\ell-1}} \mathcal{Q}_{\ell-1} - \mathcal{Q}_{\ell-1}\wp^{p^{\ell-1}} 
\]
for $\ell\geq 2$.
Cohomology operations $\mathcal{Q}_\ell$ have the following properties
\begin{itemize}
\item[{\rm (1)}] $\mathcal{Q}_{\ell}(x\cdot y)=\mathcal{Q}_{\ell}(x)\cdot y+x\cdot \mathcal{Q}_{\ell}( y)$ for $x, y\in H^{*}(BU(1)^n)$, 
\item[{\rm (2)}] $\mathcal{Q}_{\ell} t_i=t_i^{p^\ell}$ for $t_1, \dots, t_n$ in $H^2(BU(1)^n)$.
\end{itemize}
With these properties, we have the following Lemma~\ref{lemma:4.2}. We will use it to prove Proposition~\ref{proposition:5.2}.

%:lemma:4.2

\begin{lemma} \label{lemma:4.2}
In the mod $p$ cohomology of $BU(n)$, for $\ell\geq 1$, we have 
\[
\mathcal{Q}_\ell c_2=s_1s_{p^\ell}-s_{p^\ell+1}.
\]
\end{lemma}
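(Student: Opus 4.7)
The plan is to verify the identity by pulling both sides back via the injective homomorphism $\iota^{*}\colon H^{*}(BU(n))\to\mathbb{Z}/p[t_1,\dots,t_n]$ recalled at the start of Section~\ref{section:4}, and comparing the resulting symmetric polynomials. Since $\iota^{*}$ is injective, it suffices to check the identity after applying $\iota^{*}$, which reduces the cohomological claim to a purely polynomial one.

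First I would compute the left-hand side via naturality of $\mathcal{Q}_\ell$ together with the two properties of $\mathcal{Q}_\ell$ listed just above the lemma. Since $\iota^{*}(c_2)=\sum_{i<j}t_it_j$, the derivation property gives $\mathcal{Q}_\ell(t_it_j)=t_i^{p^\ell}t_j+t_it_j^{p^\ell}$, so summing yields
\[
\iota^{*}(\mathcal{Q}_\ell c_2)=\sum_{i<j}\bigl(t_i^{p^\ell}t_j+t_it_j^{p^\ell}\bigr)=\sum_{i\neq j}t_it_j^{p^\ell}.
\]
Next I would expand the right-hand side: $\iota^{*}(s_1s_{p^\ell})=\bigl(\sum_i t_i\bigr)\bigl(\sum_j t_j^{p^\ell}\bigr)=\sum_{i,j}t_it_j^{p^\ell}$, while $\iota^{*}(s_{p^\ell+1})=\sum_i t_i^{p^\ell+1}$. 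The diagonal $i=j$ terms of the double sum give precisely $\sum_i t_i^{p^\ell+1}$, which cancel against $\iota^{*}(s_{p^\ell+1})$ after subtraction, leaving $\sum_{i\neq j}t_it_j^{p^\ell}$, exactly matching the expression for $\iota^{*}(\mathcal{Q}_\ell c_2)$ above.

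There is no substantive obstacle here — the lemma is really just the derivation formula for $\mathcal{Q}_\ell$ applied to $e_2=\sum_{i<j}t_it_j$, repackaged in terms of the power sums $s_i$ via a one-line diagonal cancellation. Everything needed (the injectivity of $\iota^{*}$ onto symmetric polynomials, and the derivation/power-raising behaviour of $\mathcal{Q}_\ell$ on the $t_i$) has already been set up in Section~\ref{section:4}, so the proof amounts to the bookkeeping sketched above.
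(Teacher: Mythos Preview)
Your proposal is correct and follows essentially the same route as the paper: both arguments pull back via the injective $\iota^{*}$, apply the derivation property of $\mathcal{Q}_\ell$ to $\iota^{*}(c_2)=\sum_{i<j}t_it_j$, expand $\iota^{*}(s_1s_{p^\ell}-s_{p^\ell+1})$, and match the resulting symmetric polynomials. The only cosmetic difference is that you re-index the left-hand side as $\sum_{i\neq j}t_it_j^{p^\ell}$ before comparing, whereas the paper leaves both sides in the form $\sum_{i<j}(t_i^{p^\ell}t_j+t_it_j^{p^\ell})$.
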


\begin{proof} 
On the one hand, since 
\[
\iota^{*}(c_2)=\sum_{1\leq i<j\leq n} t_i t_j,
\]
by direct calculation, we have
\[
\iota^{*}(\mathcal{Q}_\ell (c_2))=\sum_{1\leq i<j\leq n} (t_i^{p^\ell}t_j+t_i t_j^{p^\ell}).
\]
On the other hand, we have
\[
\iota^{*}(s_{p^{\ell}} s_1-s_{p^\ell+1})=(\sum_{i=1}^n t_i^{p^{\ell}}) (\sum_{j=1}^n t_j)-\sum_{i=1}^n t_i^{p^\ell+1}=
\sum_{1\leq i<j\leq n} (t_i^{p^{\ell}}t_j+t_i t_j^{p^\ell}).
\]
Hence, we obtain the desired identity.
\end{proof}

%%%%%%%%%%%%%%%%%%%%%%%%%%%%%%%%%%%%%%%%%%%%

\section{Proof of Theorem~\ref{theorem:1.1}}\label{section:5}

In this section, we consider the commutative diagram
\[
\begin{diagram}
\node{F} \arrow{e,t}{\varphi} \arrow{s,l}{\simeq} \node{\mathrm{Map}_k(S^2, BU(n))} \arrow{s,r}{\pi} \\
\node{BS^1} \arrow{e,t}{\phi} \node{BU(n)}
\end{diagram}
\]
We begin with the following refinement of Proposition~\ref{proposition:2.3} (2).

%:proposition:5.1

\begin{proposition}\label{proposition:5.1}
As an abelian group,  
$
H^{2}(\mathrm{Map}_k(S^2, BU(n)))
$
 is generated by $\pi^*(c_1)$ and $\sigma(c_2)$.
\end{proposition}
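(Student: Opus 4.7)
The plan is to use Proposition~\ref{proposition:2.3}(2) to reduce to showing $\iota_k^*(\sigma(c_2))\ne 0$, then invoke Lemma~\ref{lemma:3.2} to reduce further to the trivial homotopy class $k=0$, and finally identify the image with the classical double cohomology suspension of $c_2$, which is well known to be nonzero.

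First, by Proposition~\ref{proposition:2.3}(2), $H^2(\mathrm{Map}_k(S^2,BU(n)))$ is generated by $\pi^*(c_1)$ together with any class $x$ satisfying $\iota_k^*(x)\ne 0$. Since $\pi\circ\iota_k$ is constant at the basepoint, $\iota_k^*\pi^*(c_1)=0$, so any element detected nontrivially on the fiber complements $\pi^*(c_1)$ in the $2$-dimensional group. Hence it suffices to prove
\[
\tilde{\sigma}_k(c_2)=\iota_k^*(\sigma(c_2))\ne 0 \quad\text{in } H^2(\Omega_k^2 BU(n)).
\]
Applying Lemma~\ref{lemma:3.2} to $c_2\in H^4(BU(n))$ (with degree $4\ne 2$) gives $\mu_f^*\tilde{\sigma}_k(c_2)=\tilde{\sigma}_0(c_2)$, so the problem reduces further to showing $\tilde{\sigma}_0(c_2)\ne 0$ in $H^2(\Omega_0^2 BU(n))$.

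When $k=0$, the composite $\pi\circ\mathrm{pr}_2\colon S^2\times\Omega_0^2 BU(n)\to BU(n)$ is constant, so $(\pi\circ\mathrm{pr}_2)^*(c_2)$ vanishes, and the restriction of $\mathrm{ev}$ to $S^2\times\Omega_0^2 BU(n)$ factors through the smash product $S^2\wedge\Omega_0^2 BU(n)$, whose adjoint is the canonical inclusion of path components $\Omega_0^2 BU(n)\hookrightarrow\Omega^2 BU(n)$. Unpacking the defining formula for $\sigma$ then identifies $\tilde{\sigma}_0$ with the classical double cohomology suspension $\sigma^2\colon H^{*}(BU(n))\to H^{*-2}(\Omega_0^2 BU(n))$. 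The nonvanishing of $\sigma^2(c_2)$ follows by two applications of the path-loop Serre spectral sequence: $c_2\in H^4(BU(n))$ transgresses to the exterior generator $x_3\in H^3(\Omega BU(n))=H^3(U(n))$, and $x_3$ in turn is the transgression of a polynomial generator $y_2\in H^2(\Omega_0 U(n))\cong H^2(\Omega SU(n))\cong\mathbb{Z}/p$, which is nontrivial for $n\geq 2$. The chief obstacle is the identification of $\tilde{\sigma}_0$ with the classical suspension, which requires carefully unpacking Takeda's free double suspension in the special case of based maps in the trivial homotopy class; once that is in hand, the subsequent cohomology computation is standard.
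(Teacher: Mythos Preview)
Your proof is correct and follows essentially the same strategy as the paper: reduce via Proposition~\ref{proposition:2.3}(2) to showing $\tilde{\sigma}_k(c_2)\neq 0$, use Lemma~\ref{lemma:3.2} to pass to $k=0$, and then identify $\tilde{\sigma}_0$ with the classical double cohomology suspension. The only cosmetic difference is that the paper, instead of tracking the two transgressions for $BU(n)$ and $U(n)$ directly, pulls back along $\lambda\colon BSU(n)\to BU(n)$; since $BSU(n)$ is $3$-connected, the double suspension $H^4(BSU(n))\to H^2(\Omega^2 BSU(n))$ is an isomorphism on the nose, which sidesteps any worry about decomposables like $c_1^2$ in $H^4(BU(n))$. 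Your transgression argument handles this anyway (cohomology suspension annihilates products), so the two arguments are equivalent.
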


\begin{proof}
Let $\lambda \colon BSU(n)\to BU(n)$ and $\lambda'\colon \Omega^2 BSU(n) \to \Omega^2_0 BU(n)$ be maps induced by the inclusion map $SU(n)\to U(n)$.
We have the following commutative diagram by Lemma~\ref{lemma:2.2} and the naturality of cohomology suspension.
\[
\begin{diagram}
\node{H^{2}(\Omega^2 BSU(n))} \node{H^{4}(BSU(n))}\arrow{w,t}{\tilde{\sigma}} \\
\node{H^{2}(\Omega_0^2 BU(n))} \arrow{n,l}{\lambda'^{*}} \node{H^{4}(BU(n))}\arrow{w,t}{\tilde{\sigma}_0} \arrow{n,r}{\lambda^*}\\
\node{H^{2}(\Omega_k^2 BU(n))} \arrow{n,l}{\mu_f^*} \node{H^{4}(BU(n))}\arrow{w,t}{\tilde{\sigma}_k} \arrow{n,r}{=}
\end{diagram}
\]
The top horizontal homomorphism $\tilde{\sigma}$ is the composition of cohomology suspensions 
\[
H^{4}(BSU(n))\to H^{3}(\Omega BSU(n))\to H^{2}(\Omega^2 BSU(n))
\]
and it is an isomorphism. Since $H^4(BSU(n))=\mathbb{Z}/p$ is generated by $\lambda^{*}(c_2)$, we have
\[
\lambda'^* \circ \mu_f^* \circ \tilde{\sigma}_k (c_2)=\tilde{\sigma}\circ \lambda^*(c_2)\not=0.
\]
Therefore, we obtain
\[
\tilde{\sigma}_k(c_2)=\iota_k^*\circ \sigma(c_2)\not=0. 
\]
By Proposition~\ref{proposition:2.3} (2), $\pi^{*}(c_1)$ and $\sigma(c_2)$ generate 
$H^{2}(\mathrm{Map}_k(S^2, BU(n)))$.
\end{proof}

Let $u\in H^2(F)=H^2(BS^1)\simeq \mathbb{Z}/p$ be the generator fixed in Section~\ref{section:4}.
Let us define $\alpha_i, \beta \in \mathbb{Z}/p$ by
\begin{align*}
\alpha_i u^i&=\varphi^{*}\circ \sigma(s_{i+1}), \\
\beta u&= \varphi^{*}\circ \sigma(c_2).
\end{align*}

%%%%%%%%%%%%%%%%%%%%%%%%%%%%%%%%%%%%%%%%%%%%

%:proposition:5.2

\begin{proposition}\label{proposition:5.2}
If $n\equiv 0 \mod (p)$,  we have
$
\beta =-\alpha_{p^{\ell}}
$
for $\ell\geq 1$. 
\end{proposition}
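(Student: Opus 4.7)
The plan is to push the identity $\mathcal{Q}_\ell c_2 = s_1 s_{p^\ell} - s_{p^\ell+1}$ from Lemma~\ref{lemma:4.2} through the composition $\varphi^{*}\circ \sigma$ and read off the coefficient of $u^{p^\ell}$. First I apply $\sigma$ to both sides. By Proposition~\ref{proposition:3.1}(2), the left-hand side becomes $\mathcal{Q}_\ell \sigma(c_2)$, and by the derivation formula of Proposition~\ref{proposition:3.1}(1), the right-hand side becomes
\[
\sigma(s_1)\cdot \pi^{*}(s_{p^\ell}) + \pi^{*}(s_1)\cdot \sigma(s_{p^\ell}) - \sigma(s_{p^\ell+1}).
\]

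Next I apply $\varphi^{*}$. The commutative square at the start of Section~\ref{section:5}, together with the homotopy equivalence $F\simeq BS^1$, gives $\varphi^{*}\circ \pi^{*} = \phi^{*}$, and Section~\ref{section:4} computes $\phi^{*}(s_i) = n u^i$. Under the hypothesis $n\equiv 0 \pmod p$, every $\pi^{*}(s_i)$ with $i\geq 1$ is annihilated after applying $\varphi^{*}$, so both cross terms vanish. Using naturality $\varphi^{*}\mathcal{Q}_\ell = \mathcal{Q}_\ell \varphi^{*}$ on the left-hand side, what remains is
\[
\mathcal{Q}_\ell(\beta u) \;=\; \mathcal{Q}_\ell \varphi^{*}\sigma(c_2) \;=\; -\varphi^{*}\sigma(s_{p^\ell+1}) \;=\; -\alpha_{p^\ell}\, u^{p^\ell}.
\]

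Finally, since $u\in H^2(F)$ corresponds to the degree-two generator of $H^2(BS^1)$ (equivalently, to the pull-back of $t_1\in H^2(BU(1)^n)$ along the first-factor inclusion), property (2) of $\mathcal{Q}_\ell$ listed before Lemma~\ref{lemma:4.2} and naturality yield $\mathcal{Q}_\ell u = u^{p^\ell}$. The left-hand side of the displayed identity is therefore $\beta u^{p^\ell}$, and comparison gives $\beta = -\alpha_{p^\ell}$. I do not anticipate any real obstacle here: the argument is a bookkeeping exercise combining the derivation and naturality properties of $\sigma$ from Proposition~\ref{proposition:3.1}, the Newton-type identity of Lemma~\ref{lemma:4.2}, and the vanishing of $\phi^{*}(s_i)$ modulo $p$ supplied by the hypothesis $n\equiv 0 \pmod p$.
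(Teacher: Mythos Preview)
Your proof is correct and follows essentially the same route as the paper: apply $\varphi^{*}\circ\sigma$ to the identity $\mathcal{Q}_\ell c_2 = s_1 s_{p^\ell} - s_{p^\ell+1}$ from Lemma~\ref{lemma:4.2}, use the derivation property of $\sigma$ (Proposition~\ref{proposition:3.1}) and $\varphi^{*}\pi^{*}=\phi^{*}$ together with $\phi^{*}(s_i)=nu^i\equiv 0$ to kill the cross terms, and evaluate the left side as $\mathcal{Q}_\ell(\beta u)=\beta u^{p^\ell}$. The only difference is cosmetic---you apply $\sigma$ and $\varphi^{*}$ in two separate steps rather than as a single composite---so no further comment is needed.
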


\begin{proof}
On the one hand, by the definition of $\beta$, we have
\[
\varphi^*\circ \sigma( c_2)=\beta u.
\]
Applying $\mathcal{Q}_\ell$, we have 
\[
\varphi^*\circ \sigma( \mathcal{Q}_\ell c_2)=(\beta u)^{p^{\ell}}=\beta u^{p^{\ell}}.
\]
On the other hand, 
by Lemma~\ref{lemma:4.2}, in the mod $p$ cohomology of $BU(n)$, we have the relation
\[
\mathcal{Q}_\ell c_2=s_1 s_{p^{\ell}}- s_{p^\ell+1}.
\]
Applying $\varphi^{*}\circ \sigma$, we have
\begin{align*}
\varphi^{*}\circ \sigma (\mathcal{Q}_\ell c_2)&=\varphi^{*}\circ \sigma(s_1) \cdot \phi^{*}(s_{p^{\ell}}) +\phi^{*}(s_1) \cdot \varphi^{*}\circ \sigma(s_{p^\ell})- \varphi^*\circ \sigma(s_{p^\ell+1})
\\
&=n \alpha_1 u^{p^\ell}  +n \alpha_{p^{\ell-1}} u^{p^\ell}-\alpha_{p^{\ell}} u^{p^\ell}
\\
&=-\alpha_{p^{\ell}} u^{p^{\ell}}.
\end{align*}
Hence, we have $\beta=-\alpha_{p^{\ell}}.$
\end{proof}

%%%%%%%%%%%%%%%%%%%%%%%%%%%%%%%%%%%%%%%%%%%%

%:proposition:5.3

Summing up Propositions~\ref{proposition:5.1} and \ref{proposition:5.2},
we have the following Proposition~\ref{proposition:5.3}. 
It reduces the proof of Theorem~\ref{theorem:1.1} to
the computation of $\alpha_p$.

\begin{proposition}\label{proposition:5.3}
The following are equivalent.
\begin{itemize}
\item[{\rm (1)}] $\varphi^{*}\colon H^{2}(\mathrm{Map}_k(S^2, BU(n)))\to H^{2}(F)$ is zero,
\item[{\rm (2)}] $\phi^{*}(c_1)=0$ and $\beta=0$,
\item[{\rm (3)}]  $\phi^{*}(c_1)=0$ and $\alpha_p=0$.
\end{itemize}
\end{proposition}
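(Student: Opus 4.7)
The proposition bundles together pieces already in hand, so the plan is essentially to unwind the definitions against the commutativity of the square relating $\varphi$, $\pi$, $\phi$ and the homotopy equivalence $F\simeq BS^1$.

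First I would establish (1)$\Leftrightarrow$(2). By Proposition~\ref{proposition:5.1}, the group $H^2(\mathrm{Map}_k(S^2,BU(n)))$ is generated as an abelian group by $\pi^{*}(c_1)$ and $\sigma(c_2)$, so $\varphi^{*}$ vanishes on $H^2$ if and only if it vanishes on both generators. The commutative square
\[
\pi\circ\varphi \simeq \phi\quad (\text{via the identification }F\simeq BS^1)
\]
gives $\varphi^{*}\pi^{*}(c_1)=\phi^{*}(c_1)$, while the defining identity $\varphi^{*}\sigma(c_2)=\beta u$ takes care of the other generator. Since $u$ generates $H^2(F)\cong\mathbb{Z}/p$, vanishing of $\varphi^{*}$ on $H^2$ is therefore equivalent to $\phi^{*}(c_1)=0$ together with $\beta=0$, which is (2).

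Next I would show (2)$\Leftrightarrow$(3). The computation from Section~\ref{section:4} gives $\phi^{*}(c_1)=\binom{n}{1}u=nu$, so the condition $\phi^{*}(c_1)=0$ is equivalent to $n\equiv 0\pmod{p}$. Under this hypothesis, Proposition~\ref{proposition:5.2} applied at $\ell=1$ yields $\beta=-\alpha_{p}$, so $\beta=0$ if and only if $\alpha_p=0$. Thus (2) and (3) agree.

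There is no real obstacle here; the content of the statement is already distributed across Propositions~\ref{proposition:5.1} and~\ref{proposition:5.2} and the elementary formula $\phi^{*}(c_1)=nu$, and the only care needed is to notice that the hypothesis $\phi^{*}(c_1)=0$ shared by (2) and (3) is precisely what activates Proposition~\ref{proposition:5.2}, so the equivalence of $\beta=0$ and $\alpha_p=0$ is only asserted in the presence of that hypothesis. This single observation is the whole point of the proposition.
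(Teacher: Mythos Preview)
Your proof is correct and follows essentially the same route as the paper's: both use Proposition~\ref{proposition:5.1} to reduce (1)$\Leftrightarrow$(2) to the two generators, the commutative square to identify $\varphi^{*}\pi^{*}(c_1)$ with $\phi^{*}(c_1)$, and then Proposition~\ref{proposition:5.2} (under the hypothesis $n\equiv 0\pmod p$ forced by $\phi^{*}(c_1)=0$) to pass between $\beta$ and $\alpha_p$. Your write-up is a bit more explicit than the paper's about why the commutative square is needed and about the role of the shared hypothesis $\phi^{*}(c_1)=0$, but the argument is the same.
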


\begin{proof}
Since $H^{2}(\mathrm{Map}_k(S^2, BU(n)))$ is generated by $\pi^{*}(c_1)$ and $\sigma(c_2)$, 
(1) and (2) are equivalent. Under the assumption that $\phi^*(c_1)=0$, we have $n\equiv 0 \mod (p)$.
Then, by Proposition~\ref{proposition:5.2}, we have
\[
\beta=-\alpha_p. 
\]Hence, (2) and (3) are equivalent.
\end{proof}

%%%%%%%%%%%%%%%%%%%%%%%%%%%%%%%%%%%%%%%%%%%%

%:proposition:5.4

By computing $\alpha_p$, we complete the proof of Theorem~\ref{theorem:1.1}.

\begin{proposition} \label{proposition:5.4}
We have $\alpha_0=k$.
\end{proposition}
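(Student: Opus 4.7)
The key observation is that $s_1 = c_1$, since the first power sum $\sum t_i$ coincides with the first elementary symmetric polynomial. So by definition $\alpha_0 = \varphi^{*}(\sigma(c_1))$, where $\sigma(c_1) \in H^0(\mathrm{Map}_k(S^2, BU(n))) \simeq \mathbb{Z}/p$, and we identify $H^0(F) \simeq \mathbb{Z}/p$ via $1 = u^0$.

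First I would unpack the defining equation of $\sigma$ applied to $c_1$:
\[
\mathrm{ev}^{*}(c_1) - (\pi \circ \mathrm{pr}_2)^{*}(c_1) = u_2 \otimes \sigma(c_1)
\]
in $H^{2}(S^2 \times \mathrm{Map}_k(S^2, BU(n)))$. Under the K\"unneth decomposition (using $H^1(S^2) = 0$), the second term on the left equals $1 \otimes \pi^{*}(c_1)$, and the decomposition of the first term into its $u_2 \otimes H^0$ and $1 \otimes H^2$ parts is therefore what pins down $\sigma(c_1)$.

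Next I would identify the $u_2 \otimes 1$ coefficient of $\mathrm{ev}^{*}(c_1)$ by restricting to $S^2 \times \{g\}$ for any chosen $g \in \mathrm{Map}_k(S^2, BU(n))$: this restriction is just $g^{*}$, and since $\mathrm{Map}_k(S^2, BU(n))$ is the component on which $\iota_k$ induces a homotopy equivalence with $\Omega_k^2 BU(n)$, its maps represent the integer homotopy class $k \in \{0,1,\dots,n-1\} \subset \mathbb{Z} = \pi_2(BU(n))$. Because $c_1$ is a generator of $H^2(BU(n);\mathbb{Z})$ dual to this $\mathbb{Z}$, we obtain $g^{*}(c_1) = k\,u_2$ mod $p$.

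Combining these, $\mathrm{ev}^{*}(c_1) = k \cdot u_2 \otimes 1 + 1 \otimes \pi^{*}(c_1)$, so subtracting $1 \otimes \pi^{*}(c_1)$ gives $u_2 \otimes \sigma(c_1) = k \cdot u_2 \otimes 1$, hence $\sigma(c_1) = k$. Applying $\varphi^{*}$ (which is a ring map sending $1$ to $1 = u^0$) yields $\alpha_0 = k$, as claimed. There is no real obstacle here; the only care needed is fixing the convention that $k \in \{0,\dots,n-1\}$ labels the component of $\mathrm{Map}(S^2, BU(n))$ lifting the $k$-th component of $\mathrm{Map}(S^2, BPU(n))$, which is already implicit in the diagram of Section~\ref{section:2}.
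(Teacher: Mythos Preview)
Your argument is correct and essentially identical to the paper's: both compute $\sigma(c_1)$ by restricting the defining equation along $S^2 \times \{g\} \hookrightarrow S^2 \times \mathrm{Map}_k(S^2, BU(n))$ for a chosen $g$ in the $k$-component, invoking $g^{*}(c_1)=k\,u_2$, and then reading off $\sigma(c_1)=k$. One small slip: $\iota_k$ is only the fibre inclusion, not a homotopy equivalence, but this plays no role---what you actually need (and use) is just the convention, already implicit in the diagram of Section~\ref{section:2}, that $\mathrm{Map}_k(S^2, BU(n))$ denotes the component with integer label $k\in\mathbb{Z}=\pi_2(BU(n))$.
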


\begin{proof}
Let $f\colon S^2\to BU(n) \in \mathrm{Map}_k(S^2, BU(n))$. By definition, 
we have 
\[
f^{*}(c_1)=k u_2.
\]
Let 
\[
\mathrm{i}_f\colon S^2 \to S^2 \times \mathrm{Map}_k(S^2, BU(n))
\]
be a map defined by $t\mapsto (t,f)$.
Then, we have
\[
f= \mathrm{ev}\circ \mathrm{i}_f
\]
and 
\[
\pi \circ \mathrm{pr}_2 \circ \mathrm{i}_f
\]
is a constant map $S^2\to \{ f(*) \}$.
It implies that 
\[
\mathrm{i}_f^*(\mathrm{ev}^{*}(c_1)-(\pi \circ \mathrm{pr}_2)^{*}(c_1))=f^{*}(c_1)=k u_2.
\]
When we restrict  $\mathrm{i}_f^*$ to $H^{2}((S^2, *) \times \mathrm{Map}_k(S^2, BU(n)))$, it is injective.
So, we have
\[
\mathrm{ev}^{*}(c_1)-(\pi \circ \mathrm{pr}_2)^{*}(c_1)=k u_2\otimes 1.
\]
Hence, by the definition of $\sigma$, we have $\sigma(c_1)=k$.
\end{proof}

%%%%%%%%%%%%%%%%%%%%%%%%%%%%%%%%%%%%%%%%%%%%

%:proposition:5.5

\begin{proposition} \label{proposition:5.5} 
If $n\equiv 0 \mod (p)$, we have $\alpha_{p}=k$. 
\end{proposition}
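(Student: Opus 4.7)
The plan is to compute $\alpha_p u^p = \varphi^{*}\sigma(s_{p+1})$ by replacing $\varphi$ with an explicit representative, namely the adjoint of the classifying map of a tensor product of a bundle on $S^2$ with the universal line bundle on $BS^1$, and then calculating with Chern roots.

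First I would build an explicit model of the fiber inclusion $\varphi$. Fix any rank-$n$ complex vector bundle $W$ on $S^2$ with $c_1(W) = k u_2$ (for example $\mathbb{C}^{n-1} \oplus M_k$ with $c_1(M_k) = k u_2$; higher Chern classes vanish automatically since $H^{\geq 4}(S^2) = 0$). Let $L$ be the universal line bundle on $BS^1$ and consider $V = \mathrm{pr}_1^{*}W \otimes \mathrm{pr}_2^{*}L$ on $S^2 \times BS^1$, with classifying map $\widetilde\varphi_0 \colon S^2 \times BS^1 \to BU(n)$. Then $V|_{\{*\} \times BS^1} \cong L^{\oplus n}$ is classified by $\phi$, and $V|_{S^2 \times \{b\}} \cong W$ represents the class $k \in \pi_2(BU(n))$. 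Crucially, $\mathbb{P}(V) = \mathbb{P}(W)$ is pulled back from $S^2$, so the composite with $BU(n) \to BPU(n)$ factors through $\mathrm{pr}_1$. Adjointing, we obtain a map $BS^1 \to \mathrm{Map}_k(S^2, BU(n))$ that lands in $F$ and whose composition with $\pi$ equals $\phi$; since $\pi \circ \varphi = \phi \circ (F \to BS^1)$ in the diagram of Section~\ref{section:2}, this pins down our adjoint as $\varphi$ under the equivalence $F \simeq BS^1$, so it computes $\varphi^{*}$ correctly.

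Second, I would apply the splitting principle to $V$. If $\delta_1, \ldots, \delta_n$ denote formal Chern roots of $W$, then $V$ has Chern roots $\delta_i + u$ where $u = c_1(L)$. Since $u_2^2 = 0$ in $H^{*}(S^2)$, one has $s_0(\delta) = n$, $s_1(\delta) = k u_2$, and $s_r(\delta) = 0$ for $r \geq 2$. Hence
\[
\widetilde\varphi^{*}(s_{p+1}) = \sum_{r=0}^{p+1} \binom{p+1}{r} u^{p+1-r} s_r(\delta) = n u^{p+1} + (p+1) k u^p u_2.
\]
Comparing this with the Künneth decomposition $\widetilde\varphi^{*}(s_{p+1}) = 1 \otimes \phi^{*}(s_{p+1}) + u_2 \otimes \varphi^{*}\sigma(s_{p+1}) = n u^{p+1} + \alpha_p u^p u_2$ coming from the definition of $\sigma$ in Section~\ref{section:3}, we read off $\alpha_p = (p+1) k \equiv k \pmod p$.

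The main obstacle is the first step: verifying that the adjoint of our explicit $\widetilde\varphi_0$ represents the same homotopy class, up to the identification $F \simeq BS^1$, as the $\varphi$ in the paper's diagram. The two key inputs are the projective-triviality $\mathbb{P}(V) = \mathbb{P}(W)$ (which ensures the adjoint lands in $F$) and the fact that the fiber inclusion in a fibration is determined up to homotopy; matching the induced map to $BU(n)$ via $\pi$ then pins down the equivalence. Once this identification is in hand, the rest of the argument is a routine Chern-root computation in $H^{*}(S^2 \times BS^1) \cong \mathbb{Z}/p[u_2, u]/(u_2^2)$.
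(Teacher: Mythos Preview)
Your approach is correct and takes a genuinely different route from the paper's. The paper argues algebraically: applying $\varphi^{*}\sigma$ to Newton's identities (Lemma~\ref{lemma:4.1}) produces a linear recursion on the $\alpha_i$ whose companion matrix is $B$; Lemma~\ref{lemma:5.6} (proved separately in Section~\ref{section:6} by exhibiting an explicit Jordan form) shows $B$ has $p$-power order in $SL_n(\mathbb{Z}/p)$, whence $\alpha_{p^{\ell}}=\alpha_0=k$ for some $\ell$, and Proposition~\ref{proposition:5.2} then gives $\alpha_p=\alpha_{p^{\ell}}=k$. You instead construct an explicit bundle-theoretic model of the fiber inclusion and read off $\alpha_p$ --- in fact the closed formula $\alpha_i=(i+1)k$ for every $i$ --- from a one-line Chern-root calculation in $H^{*}(S^2\times BS^1)$. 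Your route is shorter, eliminates Lemma~\ref{lemma:5.6} and all of Section~\ref{section:6}, and does not invoke $n\equiv 0$ until the final reduction $(p+1)k\equiv k$; the paper's route, by contrast, stays entirely inside the symmetric-function framework of Sections~\ref{section:3}--\ref{section:4} and never needs an explicit model of $\varphi$. The one step that deserves the care you flag is the identification of your adjoint $\psi$ with $\varphi$: the clean justification is that $\pi\circ\psi=\phi$ is precisely the fiber inclusion $BS^1\hookrightarrow BU(n)$, so $\pi|_{F'}\circ\psi=\mathrm{id}_{BS^1}$, making $\psi$ a homotopy inverse to $\pi|_{F'}$ --- which is exactly the equivalence through which the paper identifies $H^{*}(F)$ with $\mathbb{Z}/p[u]$.
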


We use the following Lemma~\ref{lemma:5.6} to prove Proposition~\ref{proposition:5.5}. We will prove it in the next section. Let $B$ be an $n\times n$ matrix whose $(i,j)$-entry is given by integers
\[
b_{1,j}=(-1)^{j+1} \binom{n}{j}
\]
for $1\leq j\leq n$ and 
$
b_{i,j}=1
$
if $i=j+1$, $b_{i,j}=0$ if $i\not=j+1$
for $2\leq i \leq n$, $1\leq j \leq n$.

\begin{lemma} \label{lemma:5.6}
When we regard $B$ as an element in $SL_n(\mathbb{Z}/p)$, 
the order of $B$ is a power of $p$.
\end{lemma}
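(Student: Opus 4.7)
The plan is to show that $B$ is unipotent in $M_n(\mathbb{Z}/p)$, that is, that $B - I$ is nilpotent, and then exploit the Frobenius in characteristic $p$. Once we know $(B - I)^n = 0$, commutativity of $I$ with $B - I$ gives
\[
B^{p^k} = (I + (B - I))^{p^k} = I + (B - I)^{p^k}
\]
for every $k \geq 0$, and choosing $k$ so that $p^k \geq n$ makes the right-hand side equal to $I$. The order of $B$ therefore divides $p^k$ and is hence a power of $p$. Membership of $B$ in $SL_n(\mathbb{Z}/p)$ will be checked along the way: expanding $\det B$ along the last column, whose only nonzero entry is $b_{1,n} = (-1)^{n+1}$ with complementary minor $\det I_{n-1} = 1$, yields $\det B = 1$.

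The task thus reduces to proving $(B - I)^n = 0$, for which the plan is to compute the characteristic polynomial $\chi_B(t)$ and then appeal to Cayley--Hamilton. I claim $\chi_B(t) = (t - 1)^n$. To prove this, I would work over an algebraic closure of $\mathbb{Z}/p$ and examine an arbitrary eigenvalue $\lambda$ of $B$ with eigenvector $v = (v_1, \ldots, v_n)$. Since $\det B = 1$ we have $\lambda \neq 0$. Reading $Bv = \lambda v$ in rows $2, \ldots, n$ gives $v_{i-1} = \lambda v_i$, so $v_i = v_1 \lambda^{1-i}$; since $v \neq 0$, necessarily $v_1 \neq 0$. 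Substituting into the equation coming from row $1$ and clearing denominators, one finds
\[
\sum_{j=1}^{n} (-1)^{j+1}\binom{n}{j} \lambda^{n-j} = \lambda^n,
\]
which rearranges to $\sum_{j=0}^{n}(-1)^j \binom{n}{j}\lambda^{n-j} = (\lambda-1)^n = 0$. Hence every eigenvalue of $B$ equals $1$, and since $\chi_B(t)$ is monic of degree $n$, we conclude $\chi_B(t) = (t-1)^n$.

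The only nontrivial step is the characteristic polynomial computation, and even this is short. An alternative route is to recognise $B$ (or its transpose) as a companion matrix whose first row encodes the coefficients of $(t-1)^n$, giving $\chi_B(t) = (t-1)^n$ immediately. I do not foresee a serious obstacle.
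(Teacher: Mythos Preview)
Your argument is correct and takes a genuinely different route from the paper. The paper proves Lemma~5.6 by exhibiting an explicit unimodular matrix $A$ over $\mathbb{Z}$ with $A^{-1}BA=D$, where $D$ is the single Jordan block with $1$'s on the diagonal and subdiagonal; since $D$ lies in the unipotent subgroup $U_n\subset SL_n(\mathbb{Z}/p)$, which is a $p$-group, the order is a power of $p$. You instead compute the characteristic polynomial $\chi_B(t)=(t-1)^n$ by an eigenvector analysis (equivalently, by recognising $B$ as a companion-type matrix for $(t-1)^n$), invoke Cayley--Hamilton to get $(B-I)^n=0$, and finish with the freshman's dream $B^{p^k}=I+(B-I)^{p^k}$ in the commutative ring $(\mathbb{Z}/p)[B]$. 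Your route is shorter and avoids having to guess the conjugating matrix $A$; the paper's route, on the other hand, yields the stronger integral statement that $B$ is already conjugate to its Jordan form over $\mathbb{Z}$, not merely over $\mathbb{Z}/p$ or its algebraic closure, though that extra strength is not needed for the lemma as stated.
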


\begin{proof}[Proof of Proposition~\ref{proposition:5.5}]
By Lemma~\ref{lemma:4.1}, in $H^{*}(BU(n))$, we have 
\[
s_{n+i+1} + \sum_{j=1}^{n} (-1)^j c_j s_{n+i+1-j} =0
\]
for $i\geq 0$. Applying $\varphi^{*}\circ \sigma$, we have
\[
\alpha_{n+i} u^{n+i} + \sum_{j=1}^{n} (-1)^j \phi^{*}(c_j) \cdot \alpha_{n+i-j} u^{n+i-j} +\sum_{j=1}^{n} (-1)^j \varphi^* \circ \sigma (c_j) \cdot \phi^{*}(s_{n+i-j+1}) =0.
\]
Since $\phi^{*}(s_{n+i-j+1})=0$, we obtain
\[
\alpha_{n+i} u^{n+i} + \sum_{j=1}^{n} (-1)^j \phi^{*}(c_j) \cdot \alpha_{n+i-j} u^{n+i-j}  =0.
\]
 Furthermore, since 
$\displaystyle \phi^*(c_j)=\binom{n}{j} u^i$, we have
\[
\alpha_{n+i}  + \sum_{j=1}^{n}  (-1)^{j} \binom{n}{j} \alpha_{n+i-j} =0.
\]
Thus, we have
\begin{align*}
\alpha_{n+i}&=\sum_{j=1}^n (-1)^{j+1} \binom{n}{j} \alpha_{n+i-j}, \\
\alpha_{n-1+i}&=\alpha_{n-1+i}, \\
& \;\;\vdots \\
\alpha_{1+i}&=\alpha_{1+i}.
\end{align*}
Therefore, put these identities together in matrix form, using the $n\times n$ matrix $B$ that we just defined, we have
\[
\begin{pmatrix} \alpha_{n+i} \\ \vdots \\ \alpha_{1+i} \end{pmatrix}=B\begin{pmatrix} \alpha_{n-1+i} \\ \vdots \\ \alpha_{i} \end{pmatrix} =\cdots = B^{i+1}
\begin{pmatrix} \alpha_{n-1} \\ \vdots \\ \alpha_{0} \end{pmatrix},
\]
for $i\geq 0$.
By Lemma~\ref{lemma:5.6}, the order of $B$ as an element of $SL_n(\mathbb{Z}/p)$ is a power of $p$.
Hence, for some positive integer $\ell$, we have 
\[
\alpha_{p^\ell}=\alpha_0=k.
\]
By Proposition~\ref{proposition:5.2}, we have $\alpha_{p^\ell}=-\beta=\alpha_p$.
Therefore, we obtain $\alpha_p=k$.
\end{proof}

%%%%%%%%%%%%%%%%%%%%%%%%%%%%%%%%%%%%%%%%%%%%

Proposition~\ref{proposition:5.7} below is immediate from Proposition~\ref{proposition:5.5}
and
it completes the proof of Theorem~\ref{theorem:1.1}.

\begin{proposition}\label{proposition:5.7}
The following holds.
\begin{itemize}
\item[{\rm (1)}]
If $n\not \equiv 0 \mod (p)$, then $\phi^{*}(c_1)\not=0$,
\item[{\rm (2)}]
If $n\equiv 0 \mod (p)$  and $k\not \equiv 0 \mod (p)$, then  
$\alpha_p \not=0$,
\item[{\rm (3)}] If $n\equiv 0 \mod (p)$ and $k\equiv 0 \mod (p)$, then
$\phi^{*}(c_1)=0$ and $\alpha_p=0$.
\end{itemize}
\end{proposition}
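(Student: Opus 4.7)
The plan is to derive Proposition~\ref{proposition:5.7} as a short bookkeeping consequence of two earlier results: the explicit formula $\phi^{*}(c_i)=\binom{n}{i} u^i$ computed in Section~\ref{section:4}, and the identity $\alpha_p=k$ under the hypothesis $n\equiv 0\pmod{p}$ established in Proposition~\ref{proposition:5.5}. Theorem~\ref{theorem:1.1} then follows by feeding the result into Propositions~\ref{proposition:5.3} and \ref{proposition:2.1}.

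First I would handle (1). By the formula from Section~\ref{section:4}, $\phi^{*}(c_1)=nu$ inside $H^{2}(BS^1)=\mathbb{Z}/p\{u\}$. If $n\not\equiv 0\pmod{p}$, then $nu\neq 0$, which is exactly the assertion of (1). No further input is needed.

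Next I would handle (2) and (3) in parallel under the standing hypothesis $n\equiv 0\pmod{p}$. The same formula now gives $\phi^{*}(c_1)=nu=0$, disposing of the first half of (3). Proposition~\ref{proposition:5.5} supplies $\alpha_p=k$ in $\mathbb{Z}/p$. For (2), combining $\alpha_p=k$ with $k\not\equiv 0\pmod{p}$ yields $\alpha_p\neq 0$; for (3), combining $\alpha_p=k$ with $k\equiv 0\pmod{p}$ yields $\alpha_p=0$.

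To close the loop with Theorem~\ref{theorem:1.1}, I would remind the reader of Proposition~\ref{proposition:5.3}, which says that $\varphi^{*}\colon H^{2}(\mathrm{Map}_k(S^2,BU(n)))\to H^{2}(F)$ is zero if and only if both $\phi^{*}(c_1)=0$ and $\alpha_p=0$. In cases (1) and (2) of Proposition~\ref{proposition:5.7} at least one of these quantities is nonzero, so $\varphi^{*}$ is nonzero, and Proposition~\ref{proposition:2.1} then gives the absence of $p$-torsion. In case (3) both vanish, $\varphi^{*}$ is zero, and Proposition~\ref{proposition:2.1} yields $p$-torsion. There is no real obstacle here; all the genuine work lies in Propositions~\ref{proposition:5.3} and \ref{proposition:5.5}, and the present step is purely a case analysis of the numerical invariants $n\bmod p$ and $k\bmod p$.
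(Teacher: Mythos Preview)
Your proposal is correct and matches the paper's approach exactly: the paper simply declares Proposition~\ref{proposition:5.7} ``immediate from Proposition~\ref{proposition:5.5}'' without writing out a proof, and your argument---reading off $\phi^{*}(c_1)=nu$ from Section~\ref{section:4} and invoking $\alpha_p=k$ from Proposition~\ref{proposition:5.5}---is precisely the intended unpacking. Your closing paragraph connecting to Theorem~\ref{theorem:1.1} via Propositions~\ref{proposition:5.3} and \ref{proposition:2.1} also mirrors the paper's logical flow.
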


%%%%%%%%%%%%%%%%%%%%%%%%%%%%%%%%%%%%%%%%%%%%

\section{Proof of Lemma~\ref{lemma:5.6}} \label{section:6}

In this section, we deal with unimodular $n\times n$ matrices. Unless otherwise clear from the context, matrix entries are integers. What we do in what follows is to find the transpose of the Jordan matrix similar to $B$ in Section~\ref{section:5}.

%:proposition:6.1

\begin{proposition}
\label{proposition:6.1}
There is a unimodular $n\times n$ matrix $A$ such that $A^{-1}BA=D$ where 
$(i,j)$-entry $d_{i,j}$ of $D$ is $d_{i,j}=1$ if $i=j$ or $i=j+1$ and $d_{i,j}=0$ if otherwise.
\end{proposition}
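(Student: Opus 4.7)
The plan is to recast $A^{-1}BA = D$ as $BA = AD$ and read it column by column. Writing $A = (a_1 \mid \cdots \mid a_n)$, the identity is equivalent to
\[
(B-I)a_j = a_{j+1} \quad (1 \le j \le n-1), \qquad (B-I)a_n = 0.
\]
Set $N = B - I$. A direct cofactor computation shows that $B$ is the companion matrix of $(t-1)^n$, so by Cayley-Hamilton $N^n = 0$, and the last condition becomes automatic once we take $a_{j+1} = Na_j$. Hence it suffices to find one vector $a_1 \in \mathbb{Z}^n$ for which $a_1, Na_1, \ldots, N^{n-1}a_1$ form a $\mathbb{Z}$-basis of $\mathbb{Z}^n$.

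I would take $a_1 = e_1$ and claim that the resulting $A$ has entries
\[
A_{i,j} = \binom{n-i}{j-i},
\]
under the convention $\binom{m}{\ell} = 0$ for $\ell<0$ or $\ell>m$. Such an $A$ is upper triangular with $1$'s on the diagonal, hence automatically lies in $GL_n(\mathbb{Z})$.

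To verify the claim, I would check $BA = AD$ entry by entry. For rows $i \ge 2$, only the subdiagonal of $B$ contributes, and the required identity collapses to Pascal's rule $\binom{n-i+1}{j-i+1} = \binom{n-i}{j-i} + \binom{n-i}{j+1-i}$. For row $i=1$, the relation becomes, for $1 \le j \le n-1$,
\[
\sum_{\ell=0}^{n}(-1)^{\ell}\binom{n}{\ell}\binom{n-\ell}{j-\ell} = 0,
\]
together with a single-term computation when $j = n$; both follow from $(1-1)^{j}=0$ after the standard trinomial revision $\binom{n}{\ell}\binom{n-\ell}{j-\ell} = \binom{n}{j}\binom{j}{\ell}$.

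The main obstacle is spotting the closed form for $A$. If it eludes inspection, one can instead establish the binomial formula for $A_{i,j}$ directly by induction on $j$, computing $Na_j$ one step at a time and invoking Pascal's identity at each stage; this produces the formula rather than assuming it, at the cost of slightly longer bookkeeping.
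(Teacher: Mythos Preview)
Your proposal is correct and essentially identical to the paper's proof: your matrix $A$ with entries $\binom{n-i}{j-i}$ is exactly the paper's $a_{i,j}=\binom{n-i}{n-j}$, and your entrywise verification of $BA=AD$ via Pascal's rule (rows $i\ge 2$) and the trinomial revision $\binom{n}{\ell}\binom{n-\ell}{j-\ell}=\binom{n}{j}\binom{j}{\ell}$ followed by $(1-1)^j=0$ (row $i=1$) reproduces the paper's computation line for line. The companion-matrix/Cayley--Hamilton framing you add is pleasant motivation for why such an $A$ should exist, but the executed proof is the same.
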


We prove this proposition by giving such an $A$ explicitly.  Before we do it, we complete the proof of Lemma~\ref{lemma:5.6}.

\begin{proof}[Proof of Lemma~\ref{lemma:5.6}]
By Proposition~\ref{proposition:6.1}, we have
\[
B=AD  A^{-1}.
\]
The matrix $D$ belongs to the subgroup $U_n$ of $SL_n(\mathbb{Z}/p)$ consisting of lower triangular matrices whose diagonal entries are $1$. The subgroup $U_n$ is a $p$-group.
Therefore, the order of $D$ is a power of $p$.
Hence, the order of $B$ is also the power of $p$.
\end{proof}

Now, we prove Proposition~\ref{proposition:6.1} by defining $A$ explicitly.

\begin{proof}[Proof of Proposition~\ref{proposition:6.1}]
Let $A$ be the $n\times n$ unimodular upper triangular matrix whose $(i,j)$-entry is given by 
\[
a_{i,j}=\binom{n-i}{n-j}.
\]
We show that $(i,j)$-entries of $BA$ and $AD$ are equal to $\displaystyle \binom{n-i+1}{n-j}$ for $1\leq i\leq n, 1\leq j\leq n$.

Recall that $B$ is the $n\times n$ unimodular matrix whose $(i,j)$-entry is given as follows:
For $i=1$, $1\leq j \leq n$, the $(1,j)$-entry of $B$ is given by
\[
b_{1,j}=(-1)^{j+1} \binom{n}{j}, 
\]
and, for $2\leq i \leq n$, $1\leq j \leq n$, the $(i,j)$-entry of $B$ is given by
\[
\begin{array}{rcll} 
b_{i,j}&=&1 & \mbox{if $i=j+1$,} \\
b_{i,j}&=&0 & \mbox{otherwise.}
\end{array}
\]

\noindent (1) 
For $1\leq j\leq n$, the $(1,j)$-entry of $BA$ is given by
\begin{align*}
\sum_{\ell=1}^n b_{1,\ell} a_{\ell,j}
&=\sum_{\ell=1}^j b_{1,\ell} a_{\ell,j}
\\
&=\sum_{\ell=1}^j (-1)^{\ell+1} \binom{n}{\ell}  \cdot \binom{n-\ell}{n-j}
\\
&=\sum_{\ell=1}^j (-1)^{\ell+1} \dfrac{n!}{(n-\ell)! \ell!} \cdot \dfrac{(n-\ell)!}{(n-j)!(j-\ell)!}
\\
&=\sum_{\ell=1}^j (-1)^{\ell+1} \dfrac{n!}{(n-j)! j!} \cdot \dfrac{j!}{(j-\ell)!\ell!}
\\
&=\sum_{\ell=1}^j (-1)^{\ell+1} \binom{n}{n-j} \binom{j}{\ell}
\\
&= \binom{n}{n-j} \left( \sum_{\ell=1}^j (-1)^{\ell+1}  \binom{j}{\ell}\right)
\\
&=\binom{n}{n-j}
\\
&=\binom{n-1+1}{n-j}
\end{align*}
For $2\leq i \leq n$, $1\leq j\leq n$, the $(i,j)$-entry of $BA$ is given by
\begin{align*}
\sum_{\ell=1}^n b_{i,\ell}a_{\ell,j}&=b_{i,i-1} a_{i-1,j}
\\
&=a_{i-1,j}
\\
&=\binom{n-i+1}{n-j}.
\end{align*}

\noindent (2) 
For $1\leq i \leq n$, $1\leq j \leq n$, the $(i,j)$-entry of $AD$ is given by
\begin{align*}
\sum_{\ell=1}^n a_{i,\ell}d_{\ell,j} &= a_{i,j}d_{j,j}+a_{i,j+1}d_{j+1,j}
\\
&=a_{i, j}+a_{i,j+1}
\\
&=\binom{n-i}{n-j}+\binom{n-i}{n-j-1}
\\
&=\binom{n-i+1}{n-j}.
\end{align*}
It completes the proof of Proposition~\ref{proposition:6.1}
\end{proof}

%:References

%%%%%%%%%%%%%%%%%%%%%%%%%%%%%%%%%%%%%%%%%%%%

\begin{bibdiv}[References]
\begin{biblist}

\bib{atiyah-bott-1983}{article}{
   author={Atiyah, M. F.},
   author={Bott, R.},
   title={The Yang-Mills equations over Riemann surfaces},
   journal={Philos. Trans. Roy. Soc. London Ser. A},
   volume={308},
   date={1983},
   number={1505},
   pages={523--615},
   issn={0080-4614},
   %review={\MR{0702806}},
   %doi={10.1098/rsta.1983.0017},
}

%\bib{borel-1954}{article}{
%   author={Borel, Armand},
%   title={Sur l'homologie et la cohomologie des groupes de Lie compacts
%   connexes},
%   language={French},
%   journal={Amer. J. Math.},
%   volume={76},
%   date={1954},
%   pages={273--342},
%   issn={0002-9327},
%   review={\MR{0064056}},
%   doi={10.2307/2372574},
%}

%\bib{bott-samelson-1958}{article}{
%   author={Bott, Raoul},
%   author={Samelson, Hans},
%   title={Applications of the theory of Morse to symmetric spaces},
%   journal={Amer. J. Math.},
%   volume={80},
%   date={1958},
%   pages={964--1029},
%   issn={0002-9327},
%   %review={\MR{0105694}},
%   %doi={10.2307/2372843},
%}

\bib{gottlieb-1972}{article}{
   author={Gottlieb, Daniel Henry},
   title={Applications of bundle map theory},
   journal={Trans. Amer. Math. Soc.},
   volume={171},
   date={1972},
   pages={23--50},
   issn={0002-9947},
   %review={\MR{0309111}},
   %doi={10.2307/1996373},
}

%\bib{kono-1975}{article}{
%   author={Kono, Akira},
%   title={On cohomology ${\rm mod}\ 2$ of the classifying spaces of
%   non-simply connected classical Lie groups},
%   journal={J. Math. Soc. Japan},
%   volume={27},
%   date={1975},
%   pages={281--288},
%   issn={0025-5645},
%   %review={\MR{0418098}},
%   %doi={10.2969/jmsj/02720281},
%}

%\bib{mimura-toda-book}{book}{
%   author={Mimura, Mamoru},
%   author={Toda, Hirosi},
%   title={Topology of Lie groups. I, II},
%   series={Translations of Mathematical Monographs},
%   volume={91},
%   edition={Japanese edition},
%   publisher={American Mathematical Society, Providence, RI},
%   date={1991},
%   pages={iv+451},
%   isbn={0-8218-4541-1},
%   %review={\MR{1122592}},
%   %doi={10.1090/mmono/091},
%}

\bib{minowa-2023}{arXiv}{
  author={Minowa, Yuki},
  title={On the cohomology of the classifying spaces of $SO(n)$-gauge groups over $S^2$},
  date={2023},
  eprint={2304.08702v1},
  archiveprefix={arXiv},
}

\bib{takeda-2021}{article}{
   author={Takeda, Masahiro},
   title={Cohomology of the classifying spaces of $U(n)$-gauge groups over
   the 2-sphere},
   journal={Homology Homotopy Appl.},
   volume={23},
   date={2021},
   number={1},
   pages={17--24},
   issn={1532-0073},
   %review={\MR{4140860}},
   %doi={10.4310/hha.2021.v23.n1.a2},
}

\bib{tsukuda-1997}{article}{
   author={Tsukuda, Shuichi},
   title={On the cohomology of the classifying space of a certain gauge
   group},
   journal={Proc. Roy. Soc. Edinburgh Sect. A},
   volume={127},
   date={1997},
   number={2},
   pages={407--409},
   issn={0308-2105},
   %review={\MR{1447960}},
   %doi={10.1017/S0308210500023714},
}
\end{biblist}
\end{bibdiv}

%%%%%%%%%%%%%%%%%%%%%%%%%%%%%%%%%%%%%%%%%%%%

\end{document}